\pgfplotsset{compat=newest}
\tikzset{%
	ebo unit/.store in=\ebounit,
	ebo corners/.style={rounded corners=#1\ebounit},
}
\definecolor{c595959}{RGB}{89,89,89}
\definecolor{c5a5a5a}{RGB}{90,90,90}
\setlist[enumerate]{itemsep=0mm}
\theoremstyle{plain}
\declaretheorem[title=Theorem, parent=section]{theorem}
\declaretheorem[title=Lemma,sibling=theorem]{lemma}
\declaretheorem[title=Proposition,sibling=theorem]{proposition}
\declaretheorem[title=Corollary,sibling=theorem]{corollary}
\theoremstyle{definition}
\declaretheorem[title=Remark,sibling=theorem]{remark}
\declaretheorem[title=Remark, numbered=no]{remark*}
\declaretheorem[title=Example, sibling=theorem]{example}
\declaretheorem[title=Counterexample, sibling=theorem]{counterexample}
\declaretheorem[title=Assumption, numbered=no]{assumption*}
\numberwithin{equation}{section}
\newcommand{\N}{\mathds{N}}
\newcommand{\R}{\mathds{R}}
\def\hmath$#1${\texorpdfstring{{\rmfamily\textit{#1}}}{#1}}
\newcommand{\cC}{\mathcal{C}}
\newcommand{\cE}{\mathcal{E}}
\newcommand{\eps}{\varepsilon}
\newcommand{\BIGOP}[1]
{
	\mathop{\mathchoice%
		{\raise-0.22em\hbox{\huge $#1$}}%
		{\raise-0.05em\hbox{\Large $#1$}}{\hbox{\large $#1$}}{#1}}}
\def\XXint#1#2#3{{\setbox0=\hbox{$#1{#2#3}{\int}$}
		\vcenter{\hbox{$#2#3$}}\kern-.5\wd0}}
\newcommand{\BIGboxplus}{\mathop{\mathchoice%
		{\raise-0.35em\hbox{\huge $\boxplus$}}%
		{\raise-0.15em\hbox{\Large $\boxplus$}}{\hbox{\large $\boxplus$}}{\boxplus}}}
\newcommand{\I}[1]{(\text{I}_{#1})}
\newcommand{\II} [1] {(\text{II}_{#1})}
\newcommand{\III}[1]{(\text{III}_{#1})}
\DeclareMathOperator{\diam}{diam}
\DeclareMathOperator{\esssup}{esssup}
\DeclareMathOperator{\tail}{Tail}
\DeclareMathOperator{\pv}{p.v.}
\renewcommand{\d}{\textnormal{d}}
\newcommand{\1}{{\mathbbm{1}}}
\newcommand{\norm}[1]{\left\lVert#1\right\rVert} 
\newcommand{\abs}[1]{\ensuremath{\left\vert#1\right\vert}} 
\def\namedlabel#1#2{\begingroup
	#2%
	\def\@currentlabel{#2}%
	\phantomsection\label{#1}\endgroup
}
\newcommand{\cdini}{C^{1,\text{\normalfont{Dini}}}}
\begin{document}
	\allowdisplaybreaks
	\title{The inhomogeneous fractional Dirichlet problem} 
	
	\author{Florian Grube}

	\address{Fakult{\"a}t f{\"u}r Mathematik, Universit{\"a}t Bielefeld, Postfach 10 01 31, 33501 Bielefeld, Germany}
	\email{fgrube@math.uni-bielefeld.de}
	
	\makeatletter
	\@namedef{subjclassname@2020}{%
		\textup{2020} Mathematics Subject Classification}
	\makeatother
	
	\subjclass[2020]{47G20, 35B65, 35S15, 35R09, 60G52}
	
	\keywords{Nonlocal, boundary regularity, stable operator, fractional Laplace, stable operator, Hölder regularity, Dini continuity, inhomogeneous, Dirichlet problem, exterior value problem}

	\begin{abstract} 
		We study boundary regularity for the inhomogeneous Dirichlet problem for $2s$-stable operators in generalized H{\"o}lder spaces. Moreover, we provide explicit counterexamples that showcase the sharpness of our results. Our approach directly addresses the inhomogeneous Dirichlet problem, rather than subtracting an appropriate extension of the exterior data. Even for the fractional Laplacian, our result is new. 
	\end{abstract}

	\hypersetup{pageanchor=false}
	\maketitle
	\hypersetup{pageanchor=true}
		
	\section{Introduction}\label{sec:intro}
	
	The class of nondegenerate $2s$-stable integro-differential operators is a natural generalization of the fractional Laplacian. Given a finite, symmetric measure $\mu$ on the unit sphere $S^{d-1}$, i.e., $\mu(S^{d-1})\le \Lambda<\infty$ and $\mu(A)=\mu(-A)$, and an order $s\in (0,1)$, an operator in this class takes the form
	\begin{equation}\label{eq:stable_op}
		A_\mu^su(x):= (1-s)\pv \int\limits_{0}^\infty\int\limits_{S^{d-1}} \frac{u(x)-u(x+r\theta)}{r^{1+2s}} \mu(d\theta)\d r. 
	\end{equation}
	
	We always assume that $\mu$ is nondegenerate in the sense that 
	\begin{equation}\label{eq:nondegenerate}
		\inf\limits_{\xi\in S^{d-1}}\int\limits_{S^{d-1}} \abs{\theta\cdot \xi}^{2s}\mu(\d \theta)\ge \lambda.
	\end{equation}
	If we choose $\mu$ to be the uniform distribution on the sphere, then the resulting operator is the fractional Laplacian up to a harmless constant multiple. 
	
	These operators play a crucial role both in the field of partial differential equations and stochastic processes. They are the generators of nondegenerate $2s$-stable L{\'e}vy processes and generalize the fractional Laplacian. Being translation invariant, the nondegeneracy \eqref{eq:nondegenerate} and the finiteness of the measure $\mu$ yields the comparability of the Fourier symbol of $A_\mu^s$ to $\abs{\xi}^{2s}$.\smallskip
	
	Dirichlet problems involving $2s$-stable operators have received considerable attention in the past twenty years. An inhomogeneous Dirichlet problem may be stated as follows: Given a bounded open set $\Omega$ and an exterior datum $g:\Omega^c\to \R$, it is a natural question to ask for the regularity of a solution $u$ to
	\begin{equation}\label{eq:main}
		\begin{split}
			A_\mu^s u &= 0 \text{ in }\Omega\cap B_2,\\
			u&=g \text{ on }\Omega^c\cap B_2.
		\end{split}
	\end{equation}
	For the homogeneous Dirichlet problem, i.e., $g=0$, this question was the subject of many articles in the past 20 years. Beginning with the contributions from the field of potential theory for the fractional Laplacian \cite{Bog97, Kul97, ChSo98}, sharp global regularity of $u$ was discovered to be $C^{s}$, see \cite{RoSe14} and \cite{Gru14}. Note that this result requires some regularity of the boundary $\partial\Omega$. 
	
	For the inhomogeneous problem, however, much less is known in terms of sharp Hölder regularity. In the book \cite[Section 2.6.7]{RoFe24}, the problem was studied for $g\in C^{s+\eps}$ and $g\in C^{s-\eps}$. In the case $g\in C^{s+\eps}$, the authors found that the sharp global Hölder regularity $C^s$ remains true for solutions to \eqref{eq:main}. This is achieved by rewriting the problem as a homogeneous one with right-hand side. At the same time, in the case $g\in C^{s-\eps}$, solutions belong only to $C^{s-\eps}$ in general. Inhomogeneous exterior data are treated similarly, i.e., by extension to $\R^d$ and subtraction from the equation, in the articles \cite{Gru14, Gru15, AbGr23}.\smallskip
	
	This raises the question of whether the solution $u$ to \eqref{eq:main} is in the class $C^s(\overline{\Omega\cap B_{1/2}})$ whenever $g$ belongs to $C^s(\Omega^c)$. This is, contrary to intuition, generally false, see \autoref{cex:counterexample_cs_regularity}. This observation sets the goal of this article. \smallskip
	
	\textbf{Goal:} In this article we aim to pinpoint an appropriate condition on $g$ that yields Hölder continuous solutions $u\in C^s(\overline{\Omega})$. Moreover, given an exterior datum $g$, we aim to provide the modulus of continuity of the solution $u$ up to the boundary.\smallskip
	
	Nonlocal problems often behave differently from their local counterparts. One observation to emphasize is the following. For a solution $u$ to \eqref{eq:main} to be continuous up to the boundary of $\Omega\cap B_1$, we do not need $g$ to be continuous in the interior of $\Omega^c$. The intuition is simply that the values of $g$ further away from the boundary $\partial\Omega$ have a smaller influence on the solution. Nevertheless, we do, of course, need $g$ to be continuous at the boundary. This leads to an adaptation of the Hölder spaces for the exterior data to these nonlocal Dirichlet problems. \smallskip
	 
	For any nondecreasing function $\omega:[0,\infty)\to [0,\infty)$, we say that a function $g:\Omega^c\to \R$ belongs to the class $C_\text{\normalfont ext}^\omega(\Omega^c)$ if it is bounded on $\partial\Omega$ and there exists a finite constant $C_g$ such that
	\begin{equation*}
		\abs{g(y)-g(z)}\le C_g\, \omega\big(\abs{z-y}+d_{y}+d_{z}\big)
	\end{equation*}
	for all $y,z\in \Omega^c$. Here and throughout this article, $d_x$ denotes the distance of $x\in \R^d$ to the boundary $\partial\Omega$. We denote the smallest such constant $C_g$ by $[g]_{C_\text{\normalfont ext}^\omega(\Omega^c)}$. The space $C_{\text{\normalfont ext}}^{\omega}(\Omega^c)$ is equipped with the norm $\norm{g}_{C_\text{\normalfont ext}^\omega(\Omega^c)}:= \norm{g}_{L^\infty(\partial\Omega)}+ [g]_{C_\text{\normalfont ext}^\omega(\Omega^c)}$. 
	
	This class of functions is well suited for the phenomenon described above. Away from the boundary $\partial\Omega$, functions in the class $C_{\text{\normalfont ext}}^{\omega}$ are merely locally bounded. But on the boundary, they exhibit $C^\omega(\partial\Omega)$ regularity.\smallskip
	 
	Our main result gives a precise modulus of continuity of the solution $u$ to \eqref{eq:main} depending on the modulus $\omega$ of the exterior datum $g$. 
	 
	\begin{theorem}\label{th:regularity}
		Let $s_0\in (0,1)$, $\Omega\subset \R^d$, $d \in \N$, be an open set, and let $\mu$ be a symmetric, finite (i.e., $\mu(S^{d-1})\le \Lambda$) and nondegenerate, see \eqref{eq:nondegenerate}, measure on the unit sphere $S^{d-1}$. We assume one of the following properties 
		\begin{enumerate}
			\item[(1)] $\Omega$ satisfies the exterior $2s_0$-$\cdini$-property at every boundary point $z\in \partial\Omega \cap B_2$,
			\item[(2)] $\frac{\d\mu}{\d x}\le \Lambda$ and $\Omega$ satisfies the ext.\ $\cdini$-property at every boundary point $z\in \partial\Omega \cap B_2$,
			\item[(3)] $s_0>1/2$ and $\Omega$ satisfies the ext.\ $\cdini$-property at every boundary point $z\in \partial\Omega \cap B_2$.
		\end{enumerate}
		Let $\omega:[0,\infty)\to [0,\infty)$ be a nondecreasing function satisfying $\omega(0)=0$. Then for any given exterior datum $g\in C_{\text{\normalfont ext}}^{\omega}(\Omega^c\cap \overline{B_2})$ and $s\in[s_0,1)$ any weak or continuous distributional solution $u$ to \eqref{eq:main} belongs to $C^{\sigma}(\overline{\Omega\cap B_{1/2}})$, where  
		\begin{equation*}
			\sigma(t):= t^s\Big(1+\int_{t}^1 \frac{\omega(r)}{r^{1+s}}\d r\Big).
		\end{equation*}
		Moreover, we find a constant $C$ that depends only on $d, \Omega$, $\lambda$, $\Lambda$, and $s_0$ such that
		\begin{equation}\label{eq:hoelder_estimate_general}
			\abs{u(x)-u(y)}\le C \sigma(\abs{x-y})\Big(  \norm{g}_{C_{\text{\normalfont ext}}^{\omega}(\Omega^c\cap \overline{B_2})} +\norm{\tail_\mu^s[u]}_{L^\infty(\Omega\cap B_1)}\Big)
		\end{equation}
		for any $x,y\in \overline{B_{1/2}\cap\Omega}$.
	\end{theorem}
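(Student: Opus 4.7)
The plan is to reduce the global modulus of continuity to a pointwise boundary estimate, obtained by a scale-by-scale oscillation decay at each boundary point $z \in \partial\Omega \cap B_1$, and then to glue this with standard interior regularity for $A_\mu^s$-harmonic functions. The modulus $\sigma$ arises as the discrete Dini sum of the per-scale contributions of $\omega$, summed against the geometric decay rate $r^s$ of the oscillation of $A_\mu^s$-harmonic functions at the boundary.

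\textbf{Single-scale boundary decay.} Fix $z \in \partial\Omega \cap B_1$ and assume $z = 0$. For $r \in (0,1]$ set $\osc(r) := \osc_{B_r \cap \Omega}(u - g(0))$. The core estimate is that there exist $\theta \in (0,1)$ and $\eta \in (0,1)$, depending only on $d,\lambda,\Lambda,s_0$ and on the exterior regularity of $\Omega$, such that
\begin{equation*}
\osc(\theta r) \;\le\; (1-\eta)\,\osc(r) \;+\; C\, r^{s}\,\omega(r)\, K,
\end{equation*}
where $K := \norm{g}_{C_{\text{\normalfont ext}}^{\omega}(\Omega^c\cap \overline{B_2})} + \norm{\tail_\mu^s[u]}_{L^\infty(\Omega\cap B_1)}$. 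To prove this, one would compare the nonnegative functions $\sup_{B_r\cap\Omega}(u-g(0)) - (u-g(0))$ and $(u-g(0)) - \inf_{B_r\cap\Omega}(u-g(0))$ to a barrier $\phi_r$ supplied by the exterior regularity of $\Omega$: in case (1) the exterior $2s_0$-$\cdini$-property yields $\phi_r(x) \lesssim (d_x/r)^{s}$ with $A_\mu^s \phi_r \ge c r^{-2s}$ in $B_r\cap\Omega$, and analogous barriers exist under (2) and (3). The replacement of $u$ by $u-g(0)$ introduces a defect in the equation which is controlled by two contributions: on $\partial\Omega\cap B_r$ the boundary oscillation of $g$ is bounded by $\omega(r)$, and on $\Omega^c$ the nonlocal kernel yields
\begin{equation*}
    \int_{B_r^c}\frac{|g(y)-g(0)|}{|y|^{d+2s}}\, dy \;\lesssim\; r^{-2s}\omega(r) + \int_r^\infty \frac{\omega(\rho)}{\rho^{1+2s}}\, d\rho,
\end{equation*}
both of which are absorbed into the $r^s\omega(r) K$ term.

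\textbf{Iteration and gluing.} Setting $r_k := \theta^k$ and choosing $\theta$ so that $\theta^s = 1-\eta$, iteration of the single-scale estimate gives
\begin{equation*}
\osc(r_k) \;\le\; \theta^{ks}\osc(1) + C\sum_{j=0}^{k-1}\theta^{(k-1-j)s}\, r_j^{s}\,\omega(r_j)\, K \;\le\; C\, r_k^{s}\Bigl(1 + \sum_{j=0}^{k-1}\theta^{-js}\omega(r_j)\Bigr) K.
\end{equation*}
Comparing the discrete sum $\sum_j \omega(\theta^j)/\theta^{js}$ with a Riemann sum for $\int_{r_k}^1 \omega(\rho)\rho^{-1-s}\,d\rho$ using the monotonicity of $\omega$ recovers $\osc(r_k) \le C\,\sigma(r_k)\, K$, hence $|u(x)-g(0)| \le C\sigma(d_x)K$ for every $x \in \Omega$ close to $z$. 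Combining this pointwise boundary modulus with interior $C^s$-regularity on balls $B_{d_x/2}(x)$ (again with $r^s\omega(r)$-type defect coming from the nonlocal tail of $g$) yields the global estimate \eqref{eq:hoelder_estimate_general} via the standard dichotomy: if $|x-y|\le d_x/2$, use the interior estimate; otherwise compare both points to their nearest boundary points.

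The main obstacle is the single-scale decay, specifically the construction of a barrier that works uniformly across the three hypothesis cases (1)-(3) and whose error when coupling to $u$ is compatible with the $\omega$-modulus of $g$ on both $\partial\Omega$ and $\Omega^c$. A secondary difficulty is the bookkeeping of the nonlocal tail across the iteration: one must verify that at each scale $r_k$ the tail of $u-g(0)$ computed from $\Omega\cap B_1$ remains controlled by $K$, which uses the assumption $\tail_\mu^s[u] \in L^\infty(\Omega\cap B_1)$ in an essential way.
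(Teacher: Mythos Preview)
Your route is genuinely different from the paper's. The paper does \emph{not} iterate an oscillation decay. Instead, for fixed $z\in\partial\Omega$ it writes
\[
|u(x)-g(z)|\le \int_{\Omega^c}|g(y)-g(z)|P_s(x,y)\,dy=\int_0^\infty v_t(x)\,d\xi_z(t),
\]
where $\xi_z(t)=\max_{|w-z|\le t}|g(z)-g(w)|$ and $v_t$ solves $A_\mu^s v_t=0$ in $\Omega$, $v_t=\1_{B_t(z)^c}$ on $\Omega^c$. Then it invokes the \emph{already known} sharp $C^s$ boundary estimate for $v_t$ (from the homogeneous theory in \cite{Gru24}) to get $v_t(x)\le C(|x-z|/t)^s\wedge 1$, and integrates by parts in $t$. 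This produces $|u(x)-g(z)|\lesssim\sigma(|x-z|)$ in one stroke; the interior-to-global upgrade is then done via a scale-sensitive embedding $[u]_{C^\sigma(B_{d_x/4}(x))}\le \kappa(d_x)^{-1}[u]_{C^s(B_{d_x/4}(x))}$ with $\kappa(t)=1+\int_t^1\omega(r)r^{-1-s}dr$.

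Your iterative scheme can in principle be made to work, but as written it has a concrete arithmetic error that breaks the conclusion. You claim a single-scale bound with error $Cr^s\omega(r)K$. If that were correct, the iteration with $1-\eta=\theta^s$ would give
\[
\sum_{j=0}^{k-1}\theta^{(k-1-j)s}\,r_j^s\,\omega(r_j)=\theta^{(k-1)s}\sum_{j=0}^{k-1}\omega(\theta^j)\asymp r_k^s\int_{r_k}^1\frac{\omega(\rho)}{\rho}\,d\rho,
\]
which is \emph{not} $\sigma(r_k)$; for $\omega(t)=t^\alpha$ with $\alpha<s$ it even yields the impossible bound $\osc(r)\lesssim r^s$ while the boundary datum is only $C^\alpha$. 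The correct single-scale error is $C\omega(r)K$ (the boundary oscillation of $g$ on $\partial\Omega\cap B_r$ enters as a boundary-value perturbation, not multiplied by $r^s$), and with that the sum becomes $r_k^s\sum_j\theta^{-js}\omega(\theta^j)\asymp\sigma(r_k)$ as desired. Relatedly, your ``defect in the equation'' description is off: subtracting the constant $g(0)$ does not perturb $A_\mu^s u=0$ at all. What actually generates the extra terms in the barrier comparison is the sign-indefinite tail of $u-g(0)$ on $(\Omega\cup\Omega^c)\setminus B_r$; on $\Omega\setminus B_r$ this involves $u$ itself at all previous scales, not $g$, and must be tracked through the iteration (you flag this as a ``secondary difficulty'' but it is in fact where most of the work lies in a nonlocal oscillation argument). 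Finally, the choice ``$\theta^s=1-\eta$'' presupposes that the barrier already delivers the sharp exponent $s$; this is exactly what the paper imports from \cite{Gru24}, so you should invoke that result explicitly rather than present it as a free parameter choice.
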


	Here, the tail of the function $u$ with respect to $A_\mu^s$ is defined as
	\begin{equation}\label{eq:tail}
		\tail_\mu^s[u](y):= (1-s)\int\limits_{1/2}^\infty\int\limits_{S^{d-1}}\frac{\abs{u(y+t\theta 	)}}{t^{1+2s}}\mu(\d \theta)\d t.
	\end{equation}
	We provide the exact definitions of the boundary assumptions in \autoref{th:regularity} in \autoref{sec:prelims}, see \eqref{eq:ext_dini_prop}. In essence, they say that at each boundary point $z\in \partial\Omega\cap B_2$ we can attach a certain paraboloid $\cC_{\omega}$, see \eqref{eq:ext_paraboloid}, from the outside at $z$ as in the following figure.
	\begin{figure}[!ht]
		\centering
		\begin{tikzpicture}[y=1cm, x=1cm, yscale=0.7,xscale=0.7, inner sep=0pt, outer sep=0pt]
			
			\begin{scope}[rotate=72];
				
				\coordinate (Cusp) at (0,-3.5);  
				\coordinate (Left) at (-2, 1);   
				\coordinate (Right) at (2.5, 1.5);  
				
				\draw[draw=white!20!black, fill=black!18, line width=1pt]	(Cusp)
				.. controls (0, -1.5) and (-0.5, 0.5) .. (Left)
				.. controls (-1.5, 3) and (1.5, 3.5) .. (Right)
				.. controls (3.38, 0) and (0, -0.5) .. (Cusp);
				\coordinate (Q) at (-0.3, -1.1);
				\fill[black, opacity=1, line width=0.000cm] (Right) circle (0pt) node at (-0.44,3.08) {$\Omega^c$};
				\fill[black, opacity=1] (0,2.3) circle (0pt) node[below] {$\Omega$};
				\fill[black] (Q) circle (4pt) node at (0.1, -0.93) {$z$};
			\end{scope}	
			
			\coordinate (Q1) at (1.5, -1.5);
			\coordinate (Q2) at (0.4, -1.5);
			
			\draw[draw=white!20!black, fill=black!5, line width=1pt]	(Q)
			.. controls (1.2,-0.7) and (1.4, -1)  .. (Q1)
			.. controls (Q2) .. (Q2)
			.. controls (0.5,-1) and (0.75,-0.7) .. (Q);
			
			\fill[black] (Q) circle (4pt) node at (0.95, -0.2) {};
			\fill[black] (Q) circle (4pt) node at (1.05, -1.14) {$\mathcal{C}_\omega$};
		\end{tikzpicture}
		\caption{Illustration of the exterior $\cdini$-property.}
		\label{fig:dini}
	\end{figure}
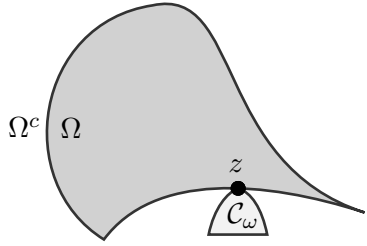\medskip
	
	Let us comment on \autoref{th:regularity} in interesting specific cases.\smallskip 
	
	\autoref{th:regularity} shows, in particular, that, if $\Omega$ is a bounded and $g$ belongs to $C^s(\Omega^c)$, then the solution $u$ is continuous and $C^s$ up to a multiplicative logarithmic blow-up.	The following corollary provides a criterion under which the solution belongs to $C^s(\overline{\Omega})$.
		\begin{corollary}\label{cor:regularity}
		Let $s, \mu, \Omega$, and $d$ be as in \autoref{th:regularity}. If the exterior datum $g:\Omega^c\to\R$ satisfies
		\begin{equation*}
			\abs{g(y)-g(z)}\le (\abs{y-z}+d_y+d_z)^s\,\iota(\abs{y-z}+d_y+d_z)
		\end{equation*} 
		for all $y,z\in \Omega^c$, where $\iota:[0,\infty)\to [0,\infty)$ is a nondecreasing function satisfying the Dini property $\int_0^1 \iota(t)/t \d t <\infty$, then any weak or continuous distributional solution $u$ to \eqref{eq:main} belongs to the Hölder space $C^s(\overline{\Omega\cap B_{1/2}})$.
	\end{corollary}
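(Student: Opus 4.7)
The plan is to deduce the corollary directly from \autoref{th:regularity} by specializing the abstract modulus $\omega$. Set
\[
\omega(t) := t^s \iota(t), \qquad t \ge 0.
\]
Since $\iota$ is nondecreasing and nonnegative and $t \mapsto t^s$ is increasing, the product $\omega$ is nondecreasing, and $\omega(0)=0$ because $0^s = 0$ (and $\iota$ is locally bounded near $0$ by monotonicity). The assumed bound on $g$ translates exactly into
\[
\abs{g(y)-g(z)} \le \omega\big(\abs{y-z}+d_y+d_z\big),
\]
so $g \in C_{\text{ext}}^{\omega}(\Omega^c \cap \overline{B_2})$; the $L^\infty(\partial \Omega \cap \overline{B_2})$-bound follows from evaluating the modulus estimate at any fixed reference point of $\partial\Omega \cap \overline{B_2}$ and using boundedness of this set.

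Next I would compute the modulus $\sigma$ given by \autoref{th:regularity}. Substituting $\omega(r) = r^s \iota(r)$ yields
\[
\sigma(t) = t^s\Big(1 + \int_t^1 \frac{r^s \iota(r)}{r^{1+s}}\,\d r\Big) = t^s\Big(1 + \int_t^1 \frac{\iota(r)}{r}\,\d r\Big).
\]
The Dini assumption $\int_0^1 \iota(r)/r \, \d r < \infty$ gives a uniform bound $\int_t^1 \iota(r)/r \, \d r \le C_\iota$ for all $t \in (0,1]$, hence $\sigma(t) \le (1+C_\iota)\, t^s$ on $[0,1]$.

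Applying \autoref{th:regularity}, any weak or continuous distributional solution $u$ to \eqref{eq:main} satisfies the estimate \eqref{eq:hoelder_estimate_general} with this $\sigma$. Combined with the bound $\sigma(t) \lesssim t^s$, this immediately gives
\[
\abs{u(x)-u(y)} \le C\, \abs{x-y}^s\Big(\norm{g}_{C_{\text{ext}}^{\omega}(\Omega^c \cap \overline{B_2})} + \norm{\tail_\mu^s[u]}_{L^\infty(\Omega\cap B_1)}\Big)
\]
for all $x,y \in \overline{\Omega \cap B_{1/2}}$, i.e., $u \in C^s(\overline{\Omega \cap B_{1/2}})$. There is essentially no obstacle here: the corollary is a clean consequence of \autoref{th:regularity} once one recognises that the Dini condition on $\iota$ is precisely what closes the integral defining $\sigma$ at $t=0$ within the $t^s$ scale.
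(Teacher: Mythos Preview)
Your proposal is correct and is exactly the intended argument: the paper states \autoref{cor:regularity} without a separate proof because it follows immediately from \autoref{th:regularity} by taking $\omega(t)=t^s\iota(t)$ and observing that the Dini assumption on $\iota$ makes $\sigma(t)\le (1+C_\iota)t^s$. Your computation of $\sigma$ and the verification that $g\in C_{\text{ext}}^\omega$ are precisely what is needed.
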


	\begin{remark}\		
		\begin{enumerate}
			\item[(i)] In the case that $\mu$ has a strictly positive, bounded density with respect to the surface measure on the sphere $S^{d-1}$, i.e., $\lambda<\mu(\theta)\le \Lambda$, the term $\norm{\tail_\mu^s[u]}_{L^\infty(B_{1})}$ is comparable to 
			\begin{equation*}
				\norm{u}_{L_{2s}^1(\R^d)}=(1-s) \int_{\R^d}\frac{\abs{u(x)}}{(1+\abs{x})^{d+2s}}\d x,
			\end{equation*} 
			where $L_{2s}^1(\R^d)$ is the typical tail space, see, e.g., \cite{RoFe24}.
			\item[(ii)] Due to condition \textit{(2)}, if we are only interested in the fractional Laplacian, then \autoref{th:regularity} holds true for open sets $\Omega\subset \R^d$ satisfying the exterior $\cdini$-property.
			\item[(iii)]The regularity assumptions on the domain $\Omega$ in \autoref{th:regularity} are due to the regularity results for the homogeneous Dirichlet problem \cite{Gru24}. 
		\end{enumerate} 
	\end{remark}	
	
	In the case of the Laplacian such results have been known for a long time. We refer to \cite[Section 1.2.4]{Maz18}, \cite[Theorem 1.2]{Tor25}, or \cite[Lemma 4]{Dya97} for the case of the ball. Note that \autoref{th:regularity} is robust in the limit $s\to 1$ and we recover the classical result for translation invariant operators $A\cdot D^2$. Indeed, \eqref{eq:hoelder_estimate_general} for $s=1$ is the sharp regularity estimate in the case of the Laplacian.
	
	\begin{remark}
		If we additionally assume that $\Omega$ is a Lipschitz domain, then instead of weak or continuous distributional solutions, we can consider distributional solutions $u\in L^1(\R^d, \nu_\mu^{s,\star})\cap L^1(\Omega\cap B_2, d_{x}^{-s})$ in \autoref{th:regularity}. This is due to the maximum principle obtained in \cite[Section 4.1]{Gru25}. For the definition of $\nu_\mu^{s,\star}$, we refer to \autoref{sec:prelims}.
	\end{remark}

	Through explicit examples, we showcase the sharpness of \autoref{cor:regularity}.
	\begin{counterexample}\label{cex:counterexample_cs_regularity}
		For every nondecreasing function $\iota:[0,\infty)\to [0,\infty)$ that is not Dini continuous in the origin, i.e., $\int_{0}^1\iota(t)/t \d t =+\infty$, there exists an exterior datum $g\in C_c^{\omega}(B_1(0)^c)$, $\omega(t)= t^s\iota(t)$, such that the weak solution to 
		\begin{equation}\label{eq:inhom_dirichlet_ball}
			\begin{split}
				(-\Delta)^s u &= 0 \text{ in } B_1(0),\\
				u&= g \text{ on }B_1(0)^c
			\end{split}
		\end{equation}
		does not belong to $C^s(\overline{B_1(0)})$. 
	\end{counterexample}
	
	In fact, we can construct a much more delicate explicit solution and prove a precise lower bound in the following theorem. This offers insight into the sharpness of \autoref{th:regularity}.
	
	\begin{theorem}\label{th:counterexample_general}
		Let $s_0 \in (0,1)$ and $d\ge 2$. There exists a positive constant $C$ such that for any nondecreasing function $\omega:[0,\infty)\to [0,\infty)$ and any $s\in [s_0,1)$, we find an exterior datum $g\in C_c^{\omega}(B_1(0)^c)$ such that the weak solution $u$ to \eqref{eq:inhom_dirichlet_ball} satisfies
		\begin{equation*}
			\abs{u(te_1)-g(e_1)}\ge C (1-t)^s \int\limits_{1-t}^1 \frac{\omega(r)}{r^{1+s}}\d r
		\end{equation*}
		for $0\le t\le 1$.
	\end{theorem}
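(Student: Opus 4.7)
The plan is to combine the explicit Riesz--Poisson formula for the fractional Laplacian on $B_1(0)$ with a dyadic decomposition of a carefully chosen single-signed exterior datum. The weak solution to \eqref{eq:inhom_dirichlet_ball} satisfies
\begin{equation*}
u(x)=c_{d,s}(1-|x|^2)^s\int_{B_1(0)^c}\frac{g(y)}{(|y|^2-1)^s|x-y|^d}\d y,
\end{equation*}
with $c_{d,s}>0$ explicit and, for $s\in[s_0,1)$, bounded above and below by constants depending only on $d$ and $s_0$. Since the kernel is positive, a sign-definite $g$ gives a sign-definite $u$, so it suffices to bound a weighted integral of $|g|$ from below.

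I would choose $g$ radial: let $\phi:[0,\infty)\to[0,1]$ be smooth with $\phi\equiv 1$ on $[1,3]$ and $\supp\phi\subset[0,4]$, and set $g(y):=-\phi(|y|)\omega(|y|-1)$. Then $g(e_1)=0$, $g\le 0$, and $g\in C_c^\omega(B_1(0)^c)$ with seminorm bounded by a constant independent of $\omega$ and $s$. To verify this, use that $d_y=|y|-1$ on $\supp\phi$ together with the monotonicity of $\omega$ to obtain $|\omega(|y|-1)-\omega(|z|-1)|\le\omega(|y-z|+d_y+d_z)$; the Leibniz contribution $\|\phi'\|_\infty|y-z|\omega(d_z)$ is absorbed using $|y-z|\le\diam\supp\phi$ together with $\omega(d_z)\le\omega(|y-z|+d_y+d_z)$.

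For the lower bound, since $g\le 0$ and $g(e_1)=0$,
\begin{equation*}
|u(te_1)-g(e_1)|=-u(te_1)\ge c_{d,s}(1-t)^s\int_D\frac{\omega(|y|-1)}{(|y|^2-1)^s|te_1-y|^d}\d y,
\end{equation*}
where $D:=\{y:1<|y|\le 3,\ |y-e_1|\le 2(|y|-1)\}$ is a cuspidal region at $e_1$ on which the Poisson kernel is controllable from below. Set $r_k:=2^{-k}$, $A_k:=D\cap\{r_k/2\le|y|-1\le r_k\}$, and $K:=\lfloor\log_2(1/(1-t))\rfloor$. On $A_k$ one has $|y|^2-1\le Cr_k$, $|te_1-y|\le Cr_k$ (using the cusp condition $|y-e_1|\le 2r_k$ and $r_k\ge 1-t$), $|A_k|\ge cr_k^d$, and $\omega(|y|-1)\ge\omega(r_k/2)$. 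Summing the contributions over $k\in\{-1,0,\dots,K\}$ and invoking the elementary comparison $\omega(r/2)r^{-s}\ge c\int_{r/4}^{r/2}\omega(u)u^{-1-s}\d u$ (immediate from the monotonicity of $\omega$), the resulting telescoping estimate yields
\begin{equation*}
|u(te_1)-g(e_1)|\ge c(1-t)^s\int_{1-t}^1\frac{\omega(r)}{r^{1+s}}\d r,
\end{equation*}
which is the claim. The main obstacle is keeping all constants uniform in $s\in[s_0,1)$, in particular the Riesz--Poisson constant $c_{d,s}$ and the ratio $s/(2^s(2^s-1))$ arising in the integral comparison, both of which remain bounded away from $0$ and $\infty$ on $[s_0,1)$ for fixed $s_0\in(0,1)$.
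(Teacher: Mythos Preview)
Your dyadic argument is clean and, for any \emph{fixed} $s$, it does produce the desired lower bound. The gap is in the uniformity as $s\to 1^-$. You assert that the Poisson constant $c_{d,s}=\Gamma(d/2)\sin(\pi s)/\pi^{d/2+1}$ is bounded away from zero on $[s_0,1)$; this is false, since $\sin(\pi s)\to 0$ as $s\to 1^-$. Tracing your estimate, the final constant is $C\asymp c_{d,s}$, so it degenerates. Moreover, the radial choice $g(y)=-\phi(|y|)\,\omega(|y|-1)$ cannot be rescued by a sharper count: because this $g$ vanishes as $|y|\to 1^+$, the near-boundary singularity $(|y|^2-1)^{-s}$ of the Poisson kernel is damped, and one checks (e.g.\ by first integrating over the angular variable) that with this datum the solution genuinely satisfies
\[
|u(te_1)|\asymp c_{d,s}\,(1-t)^s\int_{1-t}^{1}\frac{\omega(r)}{r^{1+s}}\,\d r,
\]
with no hidden $1/(1-s)$ factor to cancel the vanishing of $c_{d,s}$.

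The paper's proof avoids this precisely through a different choice of datum: it takes the \emph{non-radial} $g(y)=\omega(|y'|)\,\eta(|y|)$, where $y=(y_1,y')$, so that $g$ depends on the tangential distance to $e_1$ and does \emph{not} vanish as $|y|\to 1^+$. For each fixed $|y'|$ one can then integrate the weight $(y_1-\sqrt{1-|y'|^2})^{-s}$ over a normal segment of length bounded away from zero, which produces a factor $\sim 1/(1-s)$ (this is the role of the paper's Claim~A). That factor exactly compensates $c_{d,s}\sim(1-s)$ and yields an $s$-robust constant. Your dyadic scheme could be salvaged by switching to this tangential datum and decomposing in $|y'|$ instead of $|y|-1$; with the radial datum, however, the approach cannot give the uniform statement.
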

	
	This theorem shows, in particular, that in the class of function $C_c^s(\Omega^c)$ there exists an exterior datum $g$ such that the corresponding solution $u$ to \eqref{eq:inhom_dirichlet_ball} is not more regular than $\log-C^s$-continuous.
	
	We provide the corresponding result in one spatial dimension in \autoref{prop:counterexample_general_d1}.
	
	\begin{remark}\label{rem:uniformity}
		We can pick the exterior data in both \autoref{cex:counterexample_cs_regularity} and \autoref{th:counterexample_general} such that $g$ is smooth along any ray starting at zero, i.e., $t\mapsto g(t\theta)\in C_c^\infty([1,\infty))$ for any vector $\theta$ on the sphere $S^{d-1}$. This is even true uniformly in the parameter $\theta\in S^{d-1}$.
	\end{remark}
	
	But, there exist exterior data in the class $C_c^s(\Omega^c)$, that are not any more regular on the  boundary than $C^s(\partial\Omega)$, such that the solution $u$ to \eqref{eq:main} is indeed Hölder continuous of order $s$, see \autoref{ex:countercounterexample}. The key difference is that these exterior data are sign-changing and the corresponding solutions benefit from cancellation effects. \medskip
		
	\subsection{Key idea}
	In contrast to previous works on inhomogeneous fractional Dirichlet problems, we do not subtract an extension of the exterior data from the equation in the proof of \autoref{th:regularity}. Instead, we work with the solution to the inhomogeneous problem directly. This allows us to immediately derive the modulus of continuity of the solution $u$ from that of the exterior datum. The key ingredient is an interior-to-boundary estimate, which essentially states that for $x\in \Omega$ and $z\in \partial\Omega$
	\begin{equation*}
		\abs{u(x)-g(z)}\le \int_0^\infty v_t(x)\d \omega(t).
	\end{equation*}
	Here, as in \autoref{th:regularity}, the nondecreasing function $\omega$ is the modulus of continuity of $g$ and $v_t$ is the solution to the problem
	\begin{equation}\label{eq:hom_sol}
		\begin{split}
			A_\mu^s v_t &= 0 \text{ in }\Omega,\\
			v_t&= 0 \text{ on }\Omega^c\cap B_t(z),\\
			v_t&=1 \text{ on }\Omega^c\setminus B_t(z). 
		\end{split}
	\end{equation}
	With this formulation, we can apply regularity theory for the homogeneous Dirichlet problem to derive a global bound on $v_t$ for all $t>0$. 
	
	\subsection{Related results}
	In the past decade, boundary regularity results for problems involving nonlocal operators have enjoyed considerable attention. Important contributions to this line of research are the articles \cite{RoSe14} and \cite{Gru14}. In \cite{RoSe14}, the authors establish the sharp $C^s$ boundary regularity of the Dirichlet problem for the fractional Laplacian in bounded $C^{1,1}$-domains. Moreover, higher order expansions of the quotient $u(x)/d(x)^s$ and Hölder continuity thereof are derived. This is done with the aid of explicit barrier functions. In \cite{Gru14} and \cite{Gru15}, the fractional Laplacian and fractional elliptic pseudo-differential operators and related problems in smooth domains are studied. The author treats various boundary conditions both of local and nonlocal nature and derives a range of regularity results. Among other results, the sharp boundary regularity of solutions to \eqref{eq:main} for the fractional Laplacian is studied with exterior data $g\in C_c^{2s}(\Omega^c)$. In these works, to handle nonzero exterior data, these data are extended to the interior and subtracted to apply results for the homogeneous Dirichlet problem. The class of nondegenerate $2s$-stable operators in bounded $C^{1,1}$-domains is studied in \cite{RoSe16a}. Firstly, interior regularity is derived and, thereafter, boundary regularity of the solution $u$ (and $u(x)/d_x^s$) to the homogeneous exterior value problem $A_\mu^su=f$ in $\Omega$ and $u=0$ on $\Omega^c$. The sharpness of these results is demonstrated by two counterexamples. The sharp $C^s$-boundary regularity for such solutions fails in general in $C^1$-domains. Nevertheless, regularity up to the boundary can be obtained. This was achieved in \cite{RoSe17}. The line of research initiated in the articles \cite{Gru14} and \cite{Gru15} is generalized to the case of non-smooth domains in the work \cite{AbGr23}. A vast amount of regularity results are obtained in the book \cite{RoFe24}. In particular, the previously mentioned boundary regularity result for solutions to fractional Dirichlet problems with nonzero exterior data of the class $C^{s+\eps}(\Omega^c)$ and $C^{s-\eps}(\Omega^c)$ are contained in Section 2.6.7. This line of research has pursued two longstanding goals: generalizing the regularity results in terms of a) the class of nonlocal integro-differential operators and b) the regularity assumptions on the domain. In \cite{RoWe24}, the sharp global regularity $C^s$ of solutions to equations involving translation invariant nonlocal operators of the type
	\begin{equation*}
		L_ku(x):= \pv \int_{\R^d}(u(x)-u(y))k(x-y)\d y
	\end{equation*}
	is derived for nonhomogeneous kernels $k$, which are symmetric and pointwise comparable to that of the fractional Laplacian (i.e., $k(h)\asymp \abs{h}^{-d-2s}$). The main obstruction to achieving this result is the construction of an appropriate barrier, which in previous results heavily relied on homogeneity of the kernels of the nonlocal operators. With the aid of a nonlocal free boundary problem, the authors of \cite{RoWe24} construct a new barrier function. This result is generalized in \cite{KiWe24} to $x$-dependent kernels that satisfy a certain Hölder continuity along the diagonal. This is done by freezing the coefficients. Another generalization in this direction is contained in the article \cite{BKK25}. Therein, the coefficients of the kernels are assumed to be of vanishing mean oscillation or Dini continuous. In \cite{Gru24}, boundary regularity for solutions to exterior value problems involving nondegenerate $2s$-stable operators is generalized to bounded open sets that satisfy the exterior $\cdini$-property (resp.\ $2s$-$\cdini$ property), see \autoref{sec:prelims}. Almost optimal regularity, i.e., $C^{s-\eps}$, in Reifenberg flat domains is considered in \cite{Pra25}. Many of the previously mentioned articles also contain Hopf-type boundary estimates for nonnegative supersolutions. Besides the Dirichlet boundary / exterior condition, other conditions like nonlocal and local variants of the Neumann condition are studied, see, e.g., \cite{Montefusco_neumann}, \cite{Gru14}, \cite{Barles_Neumann}, \cite{Barles_2_Neumann}, \cite{Abatangelo_Neumann}, \cite{CiCo20}, \cite{AFR23}, and \cite{FoKa22}. \smallskip
	
	Nonlocal operators whose integration domain is restricted to the set on which the equation holds, i.e., $\Omega$, are typically called regional nonlocal operators. Boundary regularity results for the regional fractional Laplacian of order $2s$ and related operators are obtained in \cite{ChKi02}, \cite{Che18}, and \cite{Fal22}. The optimal Hölder regularity in this case is $C^{2s-1}(\overline{\Omega})$. This leads to the question of how the boundary regularity of solutions to the Dirichlet problem changes as the integration domain of the underlying nonlocal operators is further restricted, e.g., to balls whose radii depend on the distance to the boundary $B_{d_x/2}(x)$. These questions are studied in the articles \cite{Cha23} and \cite{Svi25}. \smallskip
	
	An important contribution to the regularity of nonlinear nonlocal integro-differential operators is the article \cite{CaSi09}. Boundary regularity for solutions to Dirichlet problems involving such nonlinear nonlocal operators has been a topic of research as well. We only mention a few results in this direction and refer to the references in the survey \cite{Ian24}. Fully nonlinear nonlocal integro-differential equations are treated in \cite{RoSe16} and results on nonlinear nonlocal operators like the fractional $p$-Laplacian are contained in \cite{IMS16}, \cite{IaMo24}, and \cite{BKS25}.
	
	\subsection{Outline} The article is structured as follows. In \autoref{sec:prelims}, we establish the necessary mathematical framework, introducing the notation, function spaces, geometric properties, and solution concepts used throughout the paper. The core of the paper is \autoref{sec:regularity}, which is dedicated to the proof of our main result. In the final section, explicit solutions on the ball are constructed that demonstrate the sharpness of \autoref{th:regularity} and \autoref{cor:regularity}. 
	
	\subsection*{Acknowledgments} I am grateful to Waldemar Schefer for valuable discussions and thank Moritz Kassmann and Philipp Svinger for comments on the final manuscript. I would also like to thank Xiaochuan Tian for bringing the problem to my attention. Financial support by the German Research Foundation (DFG - Project number 541771122) is gratefully acknowledged. 
	
	\section{Preliminaries}\label{sec:prelims}
	We briefly introduce the notation, conventions, boundary assumptions, and function spaces used throughout this article. 
	
	The unit sphere in the $d$-dimensional Euclidean space is denoted by $S^{d-1}$. A ball with radius $r$ and with center $x\in \R^d$ is written as $B_r(x)$. At times, we omit the center point and simply write $B_r$. By $\Gamma(\cdot)$ we denote Euler's gamma function. For a vector $x\in \R^d$, we use the convention $x=(x',x_d)$ where $x'\in \R^{d-1}$ and $x_d\in \R$. The tail of a function $u$, corresponding to an operator $A_\mu^s$ of the form \eqref{eq:stable_op}, is defined in \eqref{eq:tail}. For a measure $\mu$ on $\R^d$ or on $S^{d-1}$ we denote by $\frac{\d \mu}{\d x}$ the Radon–Nikodym derivative with respect to the Lebesgue measure or the surface measure on the unit sphere. For a real-valued function $f:\R\to \R$ and a nondecreasing function $g:\R\to \R$, let $\int_{\R} f(t)\d g(t)$ be the Riemann–Stieltjes integral of $f$ with respect to $g$.
		
	Throughout this article, we always use a bounded open set $\Omega$, for which we introduce the notation $x\mapsto d_x:=\min\{ \abs{x-y}\mid y\in \partial\Omega \}$ for the distance from $x$ to the boundary of $\Omega$. \smallskip
	
	The space of Hölder continuous functions on $\overline{\Omega}$ is written as $C^{\alpha}(\overline\Omega)$. For a nondecreasing function $\omega:[0,\infty)\to [0,\infty)$, the space of functions $C^{\omega}(\overline{\Omega})$ contains all bounded functions $u:\overline{\Omega}\to \R$ such that 
	\begin{equation*}
		\norm{u}_{C^{\omega}(\overline{\Omega})}:= \norm{u}_{L^\infty(\overline{\Omega})}+ \sup\limits_{x,y\in \overline{\Omega}} \frac{\abs{u(x)-u(y)}}{\omega(\abs{x-y})}
	\end{equation*} 
	is finite. In a similar fashion, we define the class of functions $C_\text{\normalfont ext}^\omega(\Omega^c)$ as all functions $g:\Omega^c\to \R$ that are bounded on $\partial\Omega$ and such that 
	\begin{equation*}
		[g]_{C_\text{\normalfont ext}^\omega(\Omega^c)}:= \sup\limits_{y,z\in\Omega^c}\frac{\abs{g(y)-g(z)}}{\omega\big(\abs{z-y}+d_{y}+d_{z}\big)}
	\end{equation*}
	is finite. The space $C_\text{\normalfont ext}^\omega(\Omega^c)$ is equipped with the norm $\norm{g}_{C_\text{\normalfont ext}^\omega(\Omega^c)}:=\norm{g}_{L^\infty(\partial\Omega)}+ [g]_{C_\text{\normalfont ext}^\omega(\Omega^c)}$. The space $C_\text{\normalfont ext}^\omega(\Omega^c\cap \overline{B_2})$ and the corresponding norm is defined in a similar fashion but $d_x$ remains the distance to $\partial\Omega$.
	
	We say that $\Omega$ satisfies the exterior $\cdini$-property at a boundary point $z\in \partial\Omega$ if we find a rotation $R_z$, a radius $\rho>0$, and a nondecreasing function $\omega:[0,\infty)\to [0,\infty)$ satisfying $\omega(0)=0$ and the Dini property
	\begin{equation}\label{eq:dini_prop}
		\int_0^1 \frac{\omega(t)}{t}\d t <\infty
	\end{equation}
	such that 
	\begin{equation}\label{eq:ext_dini_prop}
		R_z\big({-z}+\Omega\big) \subset \mathcal{C}_\omega^c
	\end{equation}
	where $\mathcal{C}_\omega$ is a finite paraboloid with apex at zero and slope governed by the function $t\mapsto t\omega(t)$
	\begin{equation}\label{eq:ext_paraboloid}
		\mathcal{C}_\omega:=\{ (x',x_d)\in \R^{d}\mid -\rho<x_d<-\abs{x'}\omega(\abs{x'}) \}
	\end{equation}
	This property is illustrated in \autoref{fig:dini}.
	
	We say that $\Omega$ satisfies the exterior $2s$-$\cdini$-property at a boundary point $z\in \partial\Omega$ if it satisfies the ext.\ $\cdini$-property but instead of \eqref{eq:dini_prop} the term $\int_0^1 \omega(t)^{2s}/t \d t$ is finite.
	
	Whenever we say that the exterior $\cdini$-property holds for all boundary points $z\in \partial\Omega\cap B_2$, the radius $\rho$ and $\omega$ are uniform.
	
	We say that $u$ is a weak solution to the problem
	\begin{equation}\label{eq:main_with_RHS}
		\begin{split}
			A_\mu^su&= f \text{ in }\Omega,\\
			u&= g \text{ on }\Omega^c,
		\end{split}
	\end{equation}
	if $u\in L^2(\Omega)$, $\cE_\mu^s(u,u)$ is finite, where $\cE_\mu^s(u,v)$ is defined as
	\begin{equation*}
		\frac{1-s}{2}\int\limits_{\R^d}\int\limits_{0}^\infty\int\limits_{S^{d-1}} \1_{(\Omega^c\times\Omega^c)^c}(x,x+r\theta)\frac{(u(x)-u(x+r\theta))(v(x)-v(x+r\theta))}{r^{1+2s}}\mu(\theta)\d r \d x,
	\end{equation*}
	$u=g$ on $\Omega^c$, and $$\cE_\mu^s(u,v)= \int_{\Omega} f(x)v(x)\d x$$ for any $v\in L^2(\Omega)$ such that $\cE_\mu^s(v,v)<\infty$ and $v=0$ on $\Omega^c$. 
	
	We instead say that $u$ is a continuous distributional solution to \eqref{eq:main_with_RHS} if $u$ is continuous on $\overline{\Omega}$, belongs to $L^1(\Omega)\cap L^1(\R^d,\nu_\mu^{s, \star})$, and satisfies 
	\begin{equation}\label{eq:distr_s_harmonic}
		\int\limits_{\R^d} u(x)A_\mu^sv(x)\d x = \int\limits_{\Omega} f(x)v(x)\d x
	\end{equation}
	for all $v\in C_c^\infty(\Omega)$. Here, $\nu_\mu^{s,\star}$ is a weight adapted to $A_\mu^s$ defined as in \cite[(2.1.4)]{Gru25} via
	\begin{equation*}
		\nu_\mu^{s,\star}(x):= (1-s)\int\limits_{\R} \int\limits_{S^{d-1}} \1_{\Omega}(x+r\theta)(1+\abs{r})^{-1-2s}\mu(\d \theta)\d r.
	\end{equation*}
	The space $L^1(\R^d,\nu_\mu^{s,\star})$ is sufficient for the distributional formulation \eqref{eq:distr_s_harmonic} to be well defined. 
	
	\section{Boundary regularity}\label{sec:regularity}
	
	The dominating contribution in the regularity result \autoref{cor:regularity} is the regularity in normal direction, i.e., by bounding the difference $\abs{u(x)-g(z)}$ for $x\in \Omega$ and $z\in \partial\Omega$. Before we proceed, we need the following auxiliary result. 
	
	\begin{lemma}\label{lem:asymptotically_hom}
		Let $s_0\in (0,1)$, $\Omega\subset \R^d$, $d \in \N$, be an open set, $z\in \partial\Omega$, and let $\mu$ be a symmetric, finite, and nondegenerate, see \eqref{eq:nondegenerate}, measure on the unit sphere. We assume one of the following properties 
		\begin{enumerate}
			\item[(1)] $\Omega$ satisfies the exterior $2s_0$-$\cdini$-property at $z$,
			\item[(2)] $\frac{\d \mu}{\d x}\le \Lambda$ and $\Omega$ satisfies the ext.\ $\cdini$-property at $z$,
			\item[(3)] $s_0>1/2$ and $\Omega$ satisfies the ext.\ $\cdini$-property at every boundary point $z$.
		\end{enumerate}
		Let $v_t$ be a solution to the problem \eqref{eq:hom_sol} for $t>0$. There exists a constant $C=C(s_0, d,\Omega, \lambda,\Lambda)$ independent of $s,t$, and $z$ such that 
		\begin{equation*}
				v_t(x)\le C \big( \frac{\abs{x-z}}{t}\wedge 1\big)^s \text{ for }t\le 1,\quad
				v_t(x)\le (1-s)C  \frac{\abs{x-z}^s}{t^{2s}} \text{ for }t> 1.
		\end{equation*}
	\end{lemma}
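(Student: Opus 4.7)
I would view $v_t$ as the solution of the homogeneous Dirichlet problem on $\Omega$ with bounded exterior datum $g_t := \1_{\Omega^c \setminus B_t(z)}$, and reduce both bounds to the boundary Hölder regularity for the homogeneous Dirichlet problem established in \cite{Gru24}. Under any of the hypotheses (1)--(3), that result supplies an estimate at $z$ of the form
$$\abs{v(x)-v(z)} \le C\abs{x-z}^s\Bigl(\norm{g}_{L^\infty(B_1(z)\cap\Omega^c)} + \mathcal{T}(g,z)\Bigr),$$
where $\mathcal{T}(g,z)$ is a tail functional incorporating the $(1-s)$-prefactor, and the constant $C$ is uniform in $s\in[s_0,1)$ and in $z\in\partial\Omega$.

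\textbf{Case $t\le 1$.} Rescale: the function $w(y):=v_t(z+ty)$ solves the homogeneous Dirichlet problem on $\tilde\Omega:=t^{-1}(\Omega-z)$ with exterior datum $\1_{\tilde\Omega^c\setminus B_1(0)}$, by translation invariance and $2s$-homogeneity of $A_\mu^s$. The rescaling transforms the exterior modulus $\omega$ of $\Omega$ at $z$ to $\tilde\omega(r):=\omega(tr)$, and for $t\le 1$ the relevant Dini (or $2s$-Dini) integral is controlled by that of $\omega$, so $\tilde\Omega$ satisfies the required exterior property at $0$ with uniform constants. The boundary estimate at $0$, combined with $w(0)=0$ (continuity up to the boundary, granted by the exterior Dini assumption) and $0\le w\le 1$, yields $w(y)\le C\abs{y}^s$ for $\abs{y}\le 1$. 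Translating back: $v_t(x)\le C(\abs{x-z}/t)^s$ for $\abs{x-z}\le t$; the trivial bound $v_t\le 1$ handles the remaining range.

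\textbf{Case $t>1$.} Apply the boundary estimate at $z$ directly. Since $g_t$ vanishes on $\Omega^c\cap B_1(z)$, the $L^\infty$-term is zero, while the relevant tail of $g_t$ is controlled by
$$(1-s)\int\limits_t^\infty\int\limits_{S^{d-1}}r^{-1-2s}\,\mu(\d\theta)\,\d r \;\le\; \frac{(1-s)\Lambda}{2s}\,t^{-2s},$$
producing $v_t(x)\le C(1-s)\abs{x-z}^s\,t^{-2s}$, uniformly in $s\in[s_0,1)$.

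\textbf{Main obstacle.} The principal difficulty is to extract from \cite{Gru24} a boundary regularity estimate in exactly this form: with a constant uniform in $s\in[s_0,1)$ and in $z$, and with a tail functional sharp enough that for $g=g_t$ the prefactor $(1-s)/t^{2s}$ emerges in Case $t>1$ rather than a weaker bound on the tail of $v_t$ itself (which would only decay like $(1-s)$, independently of $t$). A secondary but important check is that under the rescaling of Case $t\le 1$ the exterior modulus inherited by $\tilde\Omega$ has Dini (resp.\ $2s$-Dini) integral dominated by that of $\omega$, which follows directly from the monotonicity of $\omega$.
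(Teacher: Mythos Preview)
Your proposal is correct and follows the paper's approach: rescale for $t\le 1$ (the paper likewise notes that $t^{-1}\Omega$ inherits the exterior Dini property with a modulus independent of $t$), and invoke the boundary regularity of \cite{Gru24} for $t>1$. The paper resolves your flagged obstacle in the $t>1$ case by a slightly different device: instead of seeking a boundary estimate phrased in terms of a tail of $g_t$, it observes that for $t\ge 1+\diam(\Omega)$ the function $\1_{B_t(z)^c}$ vanishes on $\Omega$ and agrees with $g_t$ on $\Omega^c$, so $v_t$ solves the \emph{zero-data} problem on $\Omega$ with right-hand side $-A_\mu^s\1_{B_t(z)^c}$, and one computes directly $\abs{A_\mu^s\1_{B_t(z)^c}(x)}\le c(1-s)(t-\diam(\Omega))^{-2s}$ for $x\in\Omega$; the standard boundary estimate for the homogeneous problem with bounded inhomogeneity then gives the $(1-s)t^{-2s}$ factor without needing the sharper tail formulation you were worried about.
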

	\begin{proof}
		Since the problem is translation invariant, we assume $z=0$ without loss of generality. The estimate in the case $t>1$ is an easy corollary from the boundary regularity estimate \cite[Proposition 6.1 or 6.3]{Gru24}  and the observation that $0\ge A_\mu^s\1_{B_t(z)^c}(x)\ge -c(1-s)(t-\diam(\Omega))^{-2s}$ for $t\ge 1+\diam(\Omega)$ and $x\in \Omega$. We turn our attention to the case when $t\le 1$. For $t=1$, the result is an immediate consequence of \cite[Proposition 6.1 or 6.3]{Gru24}. Since the open set $\frac{1}{t}\Omega$ satisfies the exterior $\cdini$-property at $z=0$ with a modulus independent of $t$, the result in the case $t=1$ together with the function $\tilde{v}_t(y):= v_t(ty)$ yields 
		\begin{equation*}
			\abs{v_t(x)}=\abs{\tilde{v}_t(x/t)} \le c_2 \abs{x/t-z}^s= c_2 \Big(\frac{\abs{x-z}}{t}\Big)^s.
		\end{equation*}
		Finally, the comparison principle yields $0\le v_t\le 1$ which completes the proof. 
	\end{proof}
	
	The following proposition is a key ingredient in the proof of \autoref{th:regularity}. It provides the regularity of $u$ in non-tangential directions, connecting $u$ to its exterior datum. In the proof of this result, we adapt \cite[Theorem 1.2.5]{Maz18} for the Laplacian to the nonlocal setup. 
	
	\begin{proposition}\label{prop:regularity_with_modulus}
		Let $\Omega\subset \R^d, \mu, \lambda, \Lambda, s_0$, and $s$ be as in \autoref{th:regularity}, $g:\Omega^c\to \R$ be a function such that $r^{-2s}\max_{B_r(0)}\abs{g}$ converges to zero as $r\to +\infty$, and $u$ be a weak or continuous distributional solution to 
		\begin{align*}
			A_\mu^s u&= 0\text{ in }\Omega,\\
			u&=g \text{ on }\Omega^c.
		\end{align*}
		Then, we find a constant $C$ depending only on $s_0, d, \Omega, \lambda$, and $\Lambda$ such that for any $x\in \Omega$ and $z\in \Omega^c$
		\begin{equation*}
				\abs{u(x)-g(z)} \le C \abs{x-z}^s \Bigg(\,\int\limits_{\abs{x-z}}^1 \frac{\max\limits_{\abs{z-w}\le t}\abs{g(z)-g(w)}}{t^{1+s}}\d t+(1-s)\int\limits_{1}^\infty \frac{\max\limits_{\abs{z-w}\le t}\abs{g(z)-g(w)}}{t^{1+2s}}\d t\Bigg).
		\end{equation*}
	\end{proposition}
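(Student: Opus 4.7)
My approach follows the strategy outlined in the Key idea paragraph: dominate $\abs{u - g(z)}$ by an explicit barrier built as a linear superposition of the layer solutions $v_t$ from \eqref{eq:hom_sol}, and then reduce this barrier to the claimed integral via Lemma~\ref{lem:asymptotically_hom}. By translation invariance of $A_\mu^s$, I first assume $z = 0$. Let
\begin{equation*}
\omega_g(t) := \sup\{\abs{g(w) - g(0)} : w \in \Omega^c,\ \abs{w} \le t\},
\end{equation*}
a nondecreasing function with $\omega_g(0) = 0$; the hypothesis $r^{-2s}\max_{B_r}\abs{g} \to 0$ as $r\to\infty$ ensures $t^{-2s}\omega_g(t)\to 0$ at infinity, so that all integrals below make sense whenever the claim is non-trivial.

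The candidate barrier is
\begin{equation*}
\Phi(x) := \int_{(0,\infty)} v_t(x)\, \d\omega_g(t),
\end{equation*}
understood as a Riemann--Stieltjes integral. Convergence at infinity follows by combining the large-$t$ estimate of Lemma~\ref{lem:asymptotically_hom} with the tail decay of $\omega_g$ after an integration by parts. Since $v_t = \1_{\{\abs{\cdot}>t\}}$ on $\Omega^c$, the tautology $\omega_g(\abs{y}) = \int_0^\infty \1_{\{\abs{y}>t\}}\,\d\omega_g(t)$ gives $\Phi = \omega_g(\abs{\cdot})$ on $\Omega^c$, while Fubini and the linearity of \eqref{eq:hom_sol} give $A_\mu^s\Phi = 0$ in $\Omega$. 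The comparison principle (available for both solution concepts considered) applied to the inequality $\abs{g(y)-g(0)} \le \omega_g(\abs{y}) = \Phi(y)$ on $\Omega^c$ then yields $\abs{u(x) - g(0)} \le \Phi(x)$ for every $x \in \Omega$.

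To convert this into the stated estimate, I set $r := \abs{x}$ and split the defining integral of $\Phi$ into the ranges $(0,r]$, $(r,1]$, and $(1,\infty)$. The trivial bound $v_t \le 1$ on the first range and Lemma~\ref{lem:asymptotically_hom} on the remaining two give
\begin{equation*}
\Phi(x) \le \omega_g(r) + C r^s \int_r^1 t^{-s}\, \d\omega_g(t) + C(1-s)\, r^s \int_1^\infty t^{-2s}\, \d\omega_g(t).
\end{equation*}
Two integrations by parts convert the Stieltjes integrals into the Lebesgue integrals appearing in the claim plus boundary contributions proportional to $\omega_g(r)$ and $\omega_g(1)$; the endpoint at infinity vanishes by the tail decay.

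The main remaining difficulty---where I expect the bulk of the work to lie---is to absorb these leftover boundary terms uniformly in $s \in [s_0,1)$. Monotonicity of $\omega_g$ provides the lower bound
\begin{equation*}
r^s \int_r^1 \omega_g(t)\, t^{-1-s}\, \d t \ge \omega_g(r)\,\frac{1 - r^s}{s},
\end{equation*}
which absorbs $\omega_g(r)$ into the first integral of the claim on the relevant bounded range of $r$ with a constant depending only on $s_0$. The $\omega_g(1)$ contribution is absorbed analogously, using the middle integral when $s$ is close to $1$ and the tail integral when $s$ is bounded away from $1$, combined in each case with the $(1-s)$ prefactor coming from Lemma~\ref{lem:asymptotically_hom}. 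Careful bookkeeping then gives the stated estimate with $C = C(s_0,d,\Omega,\lambda,\Lambda)$; the case $z \in \interior(\Omega^c)$ reduces to $z \in \partial\Omega$ via a nearest boundary point and the triangle inequality.
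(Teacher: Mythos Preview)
Your proposal is correct and takes essentially the same route as the paper. The only difference is cosmetic: the paper obtains the key inequality $\abs{u(x)-g(z)}\le\int_0^\infty v_t(x)\,\d\xi_z(t)$ directly from the Poisson representation $u(x)-g(z)=\int_{\Omega^c}(g(y)-g(z))P_s(x,y)\,\d y$ together with $\xi_z(\abs{z-y})=\int_0^{\abs{z-y}}\d\xi_z(t)$ and Fubini, whereas you build $\Phi$ as a superposition and invoke the comparison principle. Both yield the same Stieltjes integral, and the subsequent splitting plus integration by parts is identical; the Poisson route is marginally cleaner in that it spares you the justification that $A_\mu^s$ passes through the $\d\omega_g$-integral.
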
	
	
	\begin{proof}[{Proof of \autoref{prop:regularity_with_modulus}}]
		We define $\xi_{z}(t):= \max\{ \abs{g(z)-g(w)}: \abs{z-w}\le t \}$. Note that $\xi_z$ is nondecreasing. The Poisson representation yields
		\begin{align*}
			\abs{u(x)-g(z)}&= |\int\limits_{\Omega^c} \big(g(y)-g(z)\big)P_s(x,y)\d y|\le \int\limits_{\Omega^c} \xi_z(\abs{z-y})P_s(x,y)\d y\\
			&= \int\limits_{\Omega^c}\int\limits_{0}^{\abs{z-y}} \d\xi_z(t)P_s(x,y)\d y= \int\limits_{0}^{\infty}  \int\limits_{\Omega^c} \1_{B_t(z)^c}(y) P_s(x,y)\d y\d\xi_z(t).
		\end{align*}
		Using the notation in \autoref{lem:asymptotically_hom}, we find a constant $c_1$ such that $\abs{u(x)-g(z)}$ is not larger than
		\begin{equation*}
			\int\limits_{0}^{\infty}  v_t(x)\d\xi_z(t)\le c_1\int\limits_0^{\abs{x-z}}  \d\xi_z(t)+c_1 \int\limits_{\abs{x-z}}^{1}  \frac{\abs{x-z}^s}{t^s}\d\xi_z(t)+ c_1 \abs{x-z}^s\int\limits_{1}^\infty t^{-2s}\d \xi_z(t).
		\end{equation*}
		The first term in the previous line simply equals $c_1\xi_z(\abs{x-z})$. The second term, after integration by parts, equals 
		\begin{equation*}
			c_1\int_{\abs{x-z}}^{1}  \frac{\abs{x-z}^s}{t^s}\d\xi_z(t)= c_1\xi_z(1)\abs{x-z}^s- c_1\xi_z(\abs{x-z})+c_1 s \abs{x-z}^s \int_{\abs{x-z}}^1 \frac{\xi_z(t)}{t^{1+s}}\d t.
		\end{equation*}
		Under the assumption that $\xi_z(t)/t^{2s}\to 0$ as $t\to \infty$, the third term becomes $c_1 \abs{x-z}^s \int_{1}^\infty \frac{\xi_z(t)}{t^{1+2s}}\d t$.
	\end{proof}
	
	In the final result of \autoref{th:regularity}, we will combine \autoref{prop:regularity_with_modulus} with known interior $C^s$-regularity in sufficiently small balls. To be able to treat general moduli of continuity in \autoref{th:regularity}, we need a scaling-sensitive embedding between generalized Hölder spaces. The next lemma provides this.
	
	\begin{lemma}[A generalized Hölder embedding]\label{lem:hoelder}
		Let $\kappa:[0,\infty)\to (0,\infty)$ be a nonincreasing function. We define $\iota(t):= t^s\kappa(t)$, $s\in (0,1)$, and $\eps\in (0,1/2]$. For any $r>0$ and any $u\in C^s(B_r)$ we find 
		\begin{equation*}
			[u]_{C^{\iota}(B_{\eps r})}\le \frac{1}{\kappa(r)} [u]_{C^s(B_{\eps r})}.
		\end{equation*}
	\end{lemma}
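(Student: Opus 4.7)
The plan is to unwind both seminorms pointwise and then exploit the monotonicity of $\kappa$. Fix two points $x,y\in B_{\eps r}$ and note that, because $\eps\le 1/2$, we have the triangle inequality bound $\abs{x-y}\le 2\eps r \le r$. This is the only geometric input we really need; everything else will follow from algebra.

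Starting from the definition of the ordinary Hölder seminorm, we have
\begin{equation*}
\abs{u(x)-u(y)}\le [u]_{C^s(B_{\eps r})}\abs{x-y}^s.
\end{equation*}
Now rewrite $\abs{x-y}^s$ using the definition $\iota(t)=t^s\kappa(t)$, namely $\abs{x-y}^s = \iota(\abs{x-y})/\kappa(\abs{x-y})$. Since $\kappa$ is nonincreasing and $\abs{x-y}\le r$, we have $\kappa(\abs{x-y})\ge \kappa(r)>0$, so
\begin{equation*}
\abs{x-y}^s = \frac{\iota(\abs{x-y})}{\kappa(\abs{x-y})}\le \frac{\iota(\abs{x-y})}{\kappa(r)}.
\end{equation*}
Combining the two displayed estimates yields $\abs{u(x)-u(y)}\le \kappa(r)^{-1}\iota(\abs{x-y})[u]_{C^s(B_{\eps r})}$, and dividing by $\iota(\abs{x-y})$ and taking the supremum over $x,y\in B_{\eps r}$ gives the claimed bound on $[u]_{C^{\iota}(B_{\eps r})}$.

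There is essentially no obstacle here; the lemma is a bookkeeping statement about scaling. The only point one should check is that $\eps\le 1/2$ is truly used (to guarantee $\abs{x-y}\le r$ and hence that $\kappa$ is evaluated inside its monotonicity regime), and that no integrability or continuity hypothesis on $\kappa$ beyond monotonicity is needed.
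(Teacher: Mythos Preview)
Your proof is correct and, in fact, more direct than the paper's. The paper proceeds in two steps: it first introduces an auxiliary modulus $\tilde{\iota}(t):= t^s\inf_{r>0}\kappa(rt)/\kappa(r)$, observes $\tilde{\iota}(t)\ge t^s$ on $(0,1]$ to obtain the estimate on the unit ball, and then rescales via $\tilde{u}(z)=u(rz)$ to pass to general $r$. You bypass this entirely by working at scale $r$ from the start and invoking the single inequality $\kappa(\abs{x-y})\ge\kappa(r)$, which is all that the monotonicity of $\kappa$ together with $\abs{x-y}\le 2\eps r\le r$ actually delivers. Both arguments use the hypothesis $\eps\le 1/2$ in exactly the same place, namely to ensure $\abs{x-y}\le r$; your route simply avoids the detour through $\tilde{\iota}$ and scaling.
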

	\begin{proof}
		We prove the result in two steps. First, we consider the case $r=1$ and define an auxiliary function
		\begin{equation*}
			\tilde{\iota}(t):= t^s\inf\limits_{r>0} \frac{\kappa(rt)}{\kappa(r)}.
		\end{equation*}
		Note that $\tilde{\iota}(t)\ge t^s$ for any $0<t\le 1$ since $\kappa$ is nonincreasing. Thus, we have shown that 
		\begin{equation}\label{eq:gen_hoelder_unit}
			[u]_{C^{\tilde{\iota}}(B_{\eps})}\le [u]_{C^s(B_{\eps})}.
		\end{equation}
		
		Now, we come to the second part of the proof. By applying the result from the $B_{\eps}$ ball \eqref{eq:gen_hoelder_unit} to the function $\tilde{u}(z)=u(rz)$ for $r>0$, one obtains for any $x,y\in B_{\eps r}$
		\begin{equation*}
			\frac{\abs{u(x)-u(y)}}{\tilde{\iota}(\abs{x-y}/r)}\le r^s[u]_{C^s(B_{\eps r})}.
		\end{equation*}
		Since 
		\begin{equation*}
			\tilde{\iota}(\abs{x-y}/r) \le \frac{\abs{x-y}^s}{r^s} \frac{\kappa(r \frac{\abs{x-y}}{r} )}{\kappa(r)} =  \frac{\iota(\abs{x-y})}{\iota(r)} 
		\end{equation*}
		by definition, the result follows directly.
	\end{proof}
	
	The following two lemmata are of technical nature. 
	
	\begin{lemma}\label{lem:almost_increasing}
		The function $\sigma$ as defined in \autoref{th:regularity} is almost nondecreasing, i.e., there exists a constant $c$ such that 
		\begin{equation*}
			\sigma(t_1)\le c\sigma(t_2)
		\end{equation*}
		for all $0\le t_1\le t_2$. Moreover, it satisfies 
		\begin{equation*}
			\sigma(at)\le a \sigma(t)
		\end{equation*}
		for all $a\ge 1$ and $t>0$. 
	\end{lemma}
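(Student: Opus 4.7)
I would prove the two assertions separately by direct manipulation of the defining expression, writing
\[ \sigma(t)=t^s+t^s\int_t^1\frac{\omega(r)}{r^{1+s}}\d r, \]
with the convention that the integral is zero whenever $t\ge 1$.

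\emph{The scaling bound.} For $a\ge 1$, since $s<1$ we have $(at)^s\le a\,t^s$, and since the integrand $\omega(r)/r^{1+s}$ is nonnegative, $\int_{at}^1\omega(r)/r^{1+s}\d r\le \int_t^1\omega(r)/r^{1+s}\d r$. Multiplying the two estimates yields $\sigma(at)\le a^s(1+\int_t^1\omega/r^{1+s})\,t^s\le a\sigma(t)$.

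\emph{Almost nondecreasing.} The only nontrivial case is $0\le t_1\le t_2\le 1$; for $t_2\ge 1$ the factor $t^s$ alone controls $\sigma$ and the inequality is trivial. In the nontrivial range I would decompose
\[ \sigma(t_1)=t_1^s+t_1^s\int_{t_1}^{t_2}\frac{\omega(r)}{r^{1+s}}\d r+t_1^s\int_{t_2}^1\frac{\omega(r)}{r^{1+s}}\d r. \]
Since $t_1\le t_2$, the first and third summands are each dominated by $\sigma(t_2)$ directly. Using monotonicity of $\omega$ on the middle interval, $\omega(r)\le\omega(t_2)$, and integrating $r^{-1-s}$ from $t_1$ to $t_2$, the middle summand is bounded by $\omega(t_2)(1-(t_1/t_2)^s)/s\le \omega(t_2)/s$.

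\emph{The crux.} What remains is to absorb $\omega(t_2)$ into a constant multiple of $\sigma(t_2)$. I would do this by a self-improvement argument on a doubling interval. For $t_2\le 1/2$, monotonicity of $\omega$ gives
\[ \sigma(t_2)\ge t_2^s\int_{t_2}^{2t_2}\frac{\omega(r)}{r^{1+s}}\d r\ge \omega(t_2)\cdot\frac{1-2^{-s}}{s}, \]
so that $\omega(t_2)\le\frac{s}{1-2^{-s}}\sigma(t_2)$, with the prefactor uniformly bounded for $s\in(0,1)$ (by continuity, since it tends to $1/\ln 2$ as $s\to 0^+$ and to $2$ as $s\to 1^-$). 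For $1/2<t_2\le 1$, $\sigma(t_2)\ge t_2^s\ge 2^{-s}$ is bounded below while $\omega(t_2)\le\omega(1)$, so the inequality is trivial up to a constant depending on $\omega(1)$. This self-improvement step is the main (and essentially only) obstacle: the integral defining $\sigma$ must dominate its own integrand at the lower endpoint, and the doubling argument is precisely what delivers this. The cases $t_1\le 1\le t_2$ and $1\le t_1\le t_2$ then follow by combining the above with the monotonicity of $t\mapsto t^s$.
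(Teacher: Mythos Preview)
Your argument is correct and complete. Both assertions are proved; the only dependencies of your constant are on a lower bound for $s$ and on $\omega(1)$, which matches what the paper obtains.

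Your route for the almost-monotonicity is genuinely different from the paper's. The paper differentiates $\sigma$ directly and shows that
\[
\partial_t\sigma(t)=st^{s-1}\Big(1+\int_t^1\frac{\omega(r)}{r^{1+s}}\,\d r\Big)-\frac{\omega(t)}{t}\ge 0\quad\text{for }t\le \tfrac{s}{s+1},
\]
so $\sigma$ is \emph{genuinely} nondecreasing on $[0,s/(s+1)]$; on the remaining interval $[s/(s+1),1]$ it simply observes that $\sigma\asymp 1$. You instead split the integral at $t_2$, bound the overshoot piece by $\omega(t_2)/s$, and then absorb $\omega(t_2)$ back into $\sigma(t_2)$ via a doubling estimate. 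That doubling step is essentially the content of the paper's separate \autoref{lem:sigma_vs_omega} (stated there for $\omega\le C\sigma$), so in effect you have folded that lemma into the present one. What the paper's approach buys is a slightly sharper qualitative statement (true monotonicity on a large subinterval, not just almost-monotonicity); what your approach buys is that it never touches derivatives and works purely from monotonicity of $\omega$ and positivity of the integrand. The scaling bound $\sigma(at)\le a\sigma(t)$ is handled identically in both proofs.
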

	\begin{proof}
		Clearly, $\sigma$ is increasing on $[1,\infty)$. Thus, it remains to study $\sigma$ on $[0,1]$. \smallskip	
		
		We take the derivative of $\sigma$ with respect to $t$ at $t\in (0,1)$. 
		\begin{equation*}
			\partial_t \sigma(t)= st^{s-1}(1+\int_t^1 \omega(r)/r^{1+s}\d r)- \omega(t)/t\ge st^{s-1}+t^{-1}(\int_t^1 s\frac{\omega(r)}{r}-\frac{\omega(t)}{1-t}\d r).
		\end{equation*}
		This is nonnegative if $t\le s/(1+s)$ since $\omega$ is nondecreasing and, thus, $\sigma$ is increasing on $[0,s/(s+1)]$. On the interval $[s/(s+1),1]$, the function $\sigma$ is comparable to $1$ and, thus, the result follows. \smallskip
		
		The second part is a direct consequence of the definition of $\sigma$. Let $a\ge 1$ and $t>0$, then 
		\begin{equation*}
			\sigma(at)= a^st^s\big(1+ \int_{at}^1\frac{\omega(r)}{r^{1+s}}\d r\big)\le a t^s\big(1+ \int_{t}^1\frac{\omega(r)}{r^{1+s}}\d r\big)\le a\sigma(t).
		\end{equation*}
	\end{proof}
	
	\begin{lemma}\label{lem:sigma_vs_omega}
		Let $\sigma$ and $\omega$ be as in \autoref{th:regularity}, then there exists a constant $C=C(\omega(2))$ such that 
		\begin{equation*}
			\omega(t)\le C\sigma(t)
		\end{equation*}
		for all $0\le t\le 2$. 
	\end{lemma}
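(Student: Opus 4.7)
The plan is to split the range of $t$ into two regimes at $t=1/2$, according to whether the integral contribution or the leading $t^s$ factor of $\sigma$ provides the dominant lower bound. In both regimes, the only real tool needed is the monotonicity of $\omega$.

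First I would handle $t\in (0, 1/2]$. Since $2t\le 1$, the integral defining $\sigma(t)$ contains the subinterval $[t,2t]$; discarding the complement and using $\omega(r)\ge \omega(t)$ for $r\ge t$ gives
\begin{equation*}
    \sigma(t) \ge t^s\int_t^{2t}\frac{\omega(r)}{r^{1+s}}\,\d r \ge t^s\omega(t)\int_t^{2t}r^{-1-s}\,\d r = \frac{1-2^{-s}}{s}\,\omega(t).
\end{equation*}
An elementary convexity check shows $(1-2^{-s})/s \ge 1/2$ on $(0,1]$ (the endpoint $s=1$ is the worst case, and the limit $s\to 0$ gives $\log 2$), so this yields $\omega(t)\le 2\,\sigma(t)$ uniformly in $s$ and $\omega$.

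For the remaining range $t\in [1/2, 2]$ I would simply note that $\sigma(t)\ge t^s \ge (1/2)^s \ge 1/2$ (using $s\le 1$), while $\omega(t)\le \omega(2)$ by monotonicity, giving $\omega(t)\le 2\omega(2)\,\sigma(t)$. Taking $C := \max(2,\, 2\omega(2))$ then gives a constant depending only on $\omega(2)$, as claimed. I do not anticipate any real obstacle; the only mild subtlety is the uniform lower bound on $(1-2^{-s})/s$, which ensures the constant $C$ does not blow up at either endpoint of the range $s\in [s_0,1)$.
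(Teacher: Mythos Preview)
Your proof is correct and follows essentially the same two-regime strategy as the paper: bound $\sigma(t)$ from below by the integral term for small $t$ and by $t^s$ for large $t$, arriving at the identical constant $C = 2 \vee 2\omega(2)$. The only cosmetic difference is that the paper splits at the $s$-dependent threshold $(1-s/2)^{1/s}$ and uses the full integral $\int_t^1 r^{-1-s}\,\d r = (t^{-s}-1)/s$, whereas you split at the fixed point $t=1/2$ and use only the doubling interval $\int_t^{2t}$; your version is slightly cleaner since the split is $s$-independent.
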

	\begin{proof}
		We set $C:= 2\omega(2)\vee 2$. If $t<(1-s/2)^{1/s}$, then the result follows from
		\begin{align*}
			\sigma(t)\ge t^s \int_t^1 \frac{\omega(r)}{r^{1+s}}\d r\ge \omega(t)t^s \frac{t^{-s}-1}{s}=  \omega(t)\frac{1-t^s}{s}\ge \frac{1}{2}\omega(t).
		\end{align*}
		If instead $(1-s/2)^{1/s}\le t\le 2$, then 
		\begin{equation*}
			\omega(t)\le \omega(2)\le C ((1-s/2)^{1/s})^s\le C t^s\le C\sigma(t).
		\end{equation*}
	\end{proof}
	
	With all necessary ingredients at hand, we proceed to prove our main result.

	\begin{proof}[{Proof of \autoref{th:regularity}}]
		Without loss of generality, we assume that the ball $B_2$ is centered at a boundary point $\tilde{x}\in\partial\Omega$. Let $\eta$ be a smooth, nonnegative cutoff function such that $\eta\le 1$, $\eta=1$ in $B_{3/2}$, and $\eta=0$ on $B_2^c$. We set $v:= u\eta$. This function satisfies in the weak (respectively distributional sense) $A_\mu^sv(x)=f(x)$ for $x\in \Omega\cap B_1$ where $f(x)=-A_\mu^s[u(1-\eta)](x)$ is bounded by
		\begin{equation*}
			 \abs{f(x)}\le |\int\limits_{0}^\infty\int\limits_{S^{d-1}}\frac{u(x+r\theta)}{r^{1+2s}}(1-\eta(x+r\theta))\mu(\d \theta)\d r|\le \sup_{y\in \Omega\cap B_1} \int\limits_{1/2}^\infty\int\limits_{S^{d-1}}\frac{\abs{u(y+r\theta)}}{r^{1+2s}}\mu(\d \theta)\d r.
		\end{equation*}
		Leveraging the known boundary regularity theory for the homogeneous Dirichlet problem, see \cite{Gru24}, the weak solution $v_1$ to 
		\begin{equation*}
			A_\mu^s v_1=f \text{ in }\Omega\cap B_1,\quad
			v_1=0 \text{ on }\big(\Omega^c\cap B_1\big)\cup B_1^c
		\end{equation*}
		belongs to $C^s(\overline{B_{1}})$ with the bound for any $y\in \overline{B_{2/3}}\cap \overline{\Omega}$
		\begin{equation}\label{eq:v_1-boundary_estimate}
			\abs{v_1(y)-0}\le c_1 d_y^s \norm{f}_{L^\infty(\Omega\cap B_1)}\le c_1 d_y^s\sup_{y\in \Omega\cap B_1} \int\limits_{1/2}^\infty\int\limits_{S^{d-1}}\frac{\abs{u(y+r\theta)}}{r^{1+2s}}\mu(\d \theta)\d r.
		\end{equation}
		Note that the function $v_2:=u-v_1$ satisfies in the weak sense (or $v_2$ is continuous and satisfies in the distributional sense)
		\begin{equation*}
			\begin{split}
				A_\mu^sv_2 &= 0\,\,\; \text{ in }\Omega\cap B_1,\\
				v_2 &= \eta g \text{ on }\Omega^c \cap B_1,\\
				v_2&= 0 \,\,\;\text{ on }B_2^c.
			\end{split}
		\end{equation*}
		Due to the boundary estimate \autoref{prop:regularity_with_modulus}, we find for any $y\in B_{d_x/2}(x)$
		\begin{equation}\label{eq:v_2-boundary_estimate}
			\begin{split}
				\abs{v_2(y)-g(x_0)}&\le c_2 [g\eta]_{C^{\omega}_{\text{\normalfont ext} } (\Omega^c)}\abs{y-x_0}^s\Big( \int\limits_{\abs{y-x_0}}^2 \frac{\omega(t)}{t^{1+s}}\d t +\int\limits_{2}^\infty \frac{\omega(2)}{t^{1+2s}}\d t\Big)\\
				&\le 2^sc_3\norm{g}_{C^{\omega}_{\text{\normalfont ext} } (\Omega^c\cap \overline{B_2})}d_x^s\big( 1+\int\limits_{d_x/2}^{1/2} \frac{\omega(t)}{t^{1+s}}\d t \big)
			\le 4c_3\norm{g}_{C^{\omega}_{\text{\normalfont ext} } (\Omega^c\cap \overline{B_2})}\sigma(d_x).
			\end{split}
		\end{equation}		
		In the last inequality, we used the change of variables $r=2t$ and the monotonicity of $\omega$. By standard interior regularity estimates \cite{RoSe14, RoSe16a,RoFe24,Gru24} and by scaling, we find a constant $c_2$ such that for $x\in \Omega\cap B_{1/2}$ the term $[u]_{C^s(B_{d_x/4}(x))}$ is bounded from above by 
		\begin{equation}\label{eq:interior}
			\begin{split}
				[u]_{C^s(B_{d_x/4}(x))}=[v]_{C^s(B_{d_x/4}(x))}&\le c_2\Big( d_x^s\norm{f}_{L^\infty(B_{d_x/2}(x))}+d_x^{-s}\norm{v}_{L^\infty(B_{d_x/2}(x))}\\
				& + (1-s)d_x^{-s}\sup_{y\in B_{d_x/2}(x)}\int\limits_{1/2}^\infty\int\limits_{S^{d-1}}\frac{\abs{v(y+d_x/4 r\theta )}}{r^{1+2s}}\mu(\d \theta)\d r\Big),
			\end{split}
		\end{equation}
		see, e.g., \cite[Theorem 3.3]{Gru24}. Without loss of generality, we assume that $x$ is closer to the boundary than 1. Let $x_0$ be the projection of $x$ onto the boundary. Since $w:=v(\cdot)-g(x_0)$ solves the same equation in the interior, we may replace $v$ by $w$ in \eqref{eq:interior}. Note that this does not change the left-hand side. Together with the generalized Hölder embedding, i.e., \autoref{lem:hoelder} applied with $\kappa(t):= 1+\int_t^1\omega(r)/r^{1+s}\d r$ and $\eps=1/4$, \eqref{eq:interior} yields
		\begin{equation}\label{eq:interior_refined}
			\begin{split}
				[u]_{C^{\sigma}(B_{d_x/4}(x))}&\le c_2\Big( \frac{d_x^s}{\kappa(d_x)}\norm{f}_{L^\infty(B_{d_x/2}(x))}+\frac{d_x^{-s}}{\kappa(d_x)}\norm{v-g(x_0)}_{L^\infty(B_{d_x/2}(x))}\\
				&\quad + (1-s)\frac{d_x^{-s}}{\kappa(d_x)}\esssup\limits_{y\in B_{d_x/2}(x)}\int\limits_{1/2}^\infty\int\limits_{S^{d-1}}\frac{\abs{v(y+d_x/4 r\theta )-g(x_0)}}{r^{1+2s}}\mu(\d \theta)\d r\Big),
			\end{split}
		\end{equation}
	
		We argue that the right-hand side of \eqref{eq:interior_refined} is bounded independently of $x\in \overline{\Omega}\cap B_{1/2}$. Firstly, the boundary estimates \eqref{eq:v_1-boundary_estimate} for $v_1$ and  \eqref{eq:v_2-boundary_estimate} for $v_2$ yield
		\begin{equation}\label{eq:help_boundary_estimate}
			\begin{split}
				\abs{v(y)-g(x_0)}&\le \abs{v_2(y)-g(x_0)}+\abs{v_1(y)-0}\\
				&\le c_4\Big(\norm{g}_{C^{\omega}_{\text{\normalfont ext} } (\Omega^c\cap \overline{B_2})}\kappa(d_x) + \norm{\tail_\mu^s[u]}_{L^\infty(\Omega\cap B_1)} \Big)d_x^s
			\end{split}
		\end{equation}
		Due to this estimate the term $d_x^{-s}\kappa(d_x)^{-1}\norm{v-g(x_0)}_{L^\infty(B_{d_x/2}(x))}$ is bounded.\smallskip
		
		Let us now consider the inhomogeneity $f$. By the previous estimate, we get
		\begin{equation*}
			\frac{d_x^s}{\kappa(d_x)}\norm{f}_{L^\infty(B_{d_x/2}(x))} \le \diam(\Omega)^s  \norm{\tail_\mu^s[u]}_{L^\infty(B_{1}\cap \Omega)}.
		\end{equation*}
				
		It remains to estimate the tail term in \eqref{eq:interior_refined}. For the moment, let $y\in B_{d_x/2}(x)$. Note that, if $y+(d_x/4)r\theta$ belongs to $\Omega$, then estimates very similar to \eqref{eq:help_boundary_estimate}, \eqref{eq:v_1-boundary_estimate}, and \eqref{eq:v_2-boundary_estimate} yield
		\begin{equation}\label{eq:boundary_helper1}
			\begin{split}
				&\abs{ v(y+(d_x/4)r\theta)-g(x_0) }\\
				&\quad\le c_5\Big(\norm{g}_{C^{\omega}_{\text{\normalfont ext} } (\Omega^c\cap \overline{B_2})}\kappa(\abs{y+(d_x/4)r\theta-x_0}) + \norm{\tail_\mu^s[u]}_{L^\infty(\Omega\cap B_1)} \Big)\abs{y+(d_x/4)r\theta-x_0}^s\\
				&\quad\le c_6\Big(\norm{g}_{C^{\omega}_{\text{\normalfont ext} } (\Omega^c\cap \overline{B_2})} +\norm{\tail_\mu^s[u]}_{L^\infty(\Omega\cap B_1)} \Big) \kappa(\abs{y-x_0}+(d_x/4)r)(\abs{y-x_0}+(d_x/4)r)^s\\
				&\quad\le c_7\Big(\norm{g}_{C^{\omega}_{\text{\normalfont ext} } (\Omega^c\cap \overline{B_2})} + \norm{\tail_\mu^s[u]}_{L^\infty(\Omega\cap B_1)}\Big)\kappa(d_x)d_x^s\big(1+r\big)^s.
			\end{split}			
		\end{equation}
		Here, we used \autoref{lem:almost_increasing} and the monotonicity of $\omega$. \smallskip
		
		If, instead, $y+(d_x/4)r\theta$ belongs to $\Omega^c\cap B_2$, then the assumption on the exterior datum yields
		\begin{equation}\label{eq:boundary_helper2}
			\begin{split}
			&\abs{ g(y+(d_x/4)r\theta)-g(x_0) }\\
			&\quad\le [g]_{C_{\text{ext}}^{\omega}}\omega\big(d_{y+d_x/4r\theta}+\abs{ y+d_x/4r\theta-x_0 }\big)\\
			&\quad\le c_8[g]_{C_{\text{ext}}^{\omega}}(3+r/2)^s d_x^s \kappa(d_x). 	
			\end{split}
		\end{equation}
		Here, we used \autoref{lem:sigma_vs_omega}.
		Lastly, if $y+(d_x/4)r\theta$ belongs to $B_2^c$, then 
		\begin{equation}\label{eq:boundary_helper3}
			d_x \frac{r}{4}\ge \abs{y+d_x \frac{r}{4}-\tilde{x}}- \abs{y-x}-\abs{x-\tilde{x}}\ge 3/4
		\end{equation}
		where $\tilde{x}$ is the center of the original ball $B_2$ fixed in the statement of \autoref{th:regularity}. These preliminary estimates allow us to bound the tail in \eqref{eq:interior_refined}. 
		\begin{align*}
		\frac{1-s}{d_x^{s}\kappa(d_x)}\esssup_{y\in B_{d_x/2}}\int\limits_{1/2}^\infty\int\limits_{S^{d-1}}\frac{\abs{v(y+d_x/4 r\theta )-g(x_0)}}{r^{1+2s}}\mu(\d \theta)\d r=\I{}+\II{}+\III{}
		\end{align*}
		where 
		\begin{align*}
			\I{}&:= \frac{1-s}{d_x^{s}\kappa(d_x)}\esssup_{y\in B_{d_x/2}}\int\limits_{1/2}^\infty\int\limits_{S^{d-1}}\1_{\Omega\cap B_2}(y+d_x/4 r\theta)\frac{\abs{v(y+d_x/4 r\theta )-g(x_0)}}{r^{1+2s}}\mu(\d \theta)\d r\\
			&\le \Lambda c_7\Big(\norm{g}_{C^{\omega}_{\text{\normalfont ext} } (\Omega^c\cap \overline{B_2})} + \norm{\tail_\mu^s[u]}_{L^\infty(\Omega\cap B_1)} \Big)\int\limits_{1/2}^\infty\frac{(1+r)^s}{r^{1+2s}}\d r.
		\end{align*}
		Here, we used \eqref{eq:boundary_helper1}. The second term $\II{}$ is defined via 
		\begin{align*}
			\II{}&:= \frac{1-s}{d_x^{s}\kappa(d_x)}\esssup_{y\in B_{d_x/2}}\int\limits_{1/2}^\infty\int\limits_{S^{d-1}}\1_{\Omega^c\cap B_2}(y+d_x/4 r\theta)\\
			&\qquad \times\frac{\abs{\eta(y+d_x/4r \theta)g(y+d_x/4 r\theta )-g(x_0)}}{r^{1+2s}}\mu(\d \theta)\d r\\
			&\le c_{9}[\eta g]_{C_{\text{ext}}^{\omega}(\Omega^c\cap \overline{B_2})} \Lambda\int\limits_{1/2}^\infty\frac{(3+r/2)^s}{r^{1+2s}}\d r \le \frac{c_{10}}{s}\norm{g}_{C_{\text{\normalfont ext}}^{\omega}(\Omega^c\cap \overline{B_2})}.
		\end{align*}
		Here, we used \eqref{eq:boundary_helper2} for $\eta g$ instead of $g$. The last term, i.e., $\III{}$, is bounded using \eqref{eq:boundary_helper3}. 
		\begin{align*}
			\III{}&:=\frac{1-s}{d_x^{s}\kappa(d_x)}\esssup_{y\in B_{d_x/2}}\int\limits_{1/2}^\infty\int\limits_{S^{d-1}}\1_{y+d_x/4 r\theta\in B_2^c}\frac{\abs{v(y+d_x/4 r\theta )-g(x_0)}}{r^{1+2s}}\mu(\d \theta)\d r\\
			&\le \frac{1-s}{d_x^{s}\kappa(d_x)}\esssup_{y\in B_{d_x/2}}\int\limits_{3/d_x}^\infty\int\limits_{S^{d-1}}\1_{y+d_x/4 r\theta\in B_2^c}\frac{\abs{v(y+d_x/4 r\theta )-g(x_0)}}{r^{1+2s}}\mu(\d \theta)\d r\\
			&=\frac{d_x^{s}}{4^{2s}}\frac{1-s}{\kappa(d_x)}\esssup_{y\in B_{d_x/2}}\int\limits_{3/4}^\infty\int\limits_{S^{d-1}}\frac{\abs{v(y+t\theta )-g(x_0)}}{t^{1+2s}}\mu(\d \theta)\d t\\
			&\le (1-s)\frac{\Lambda}{2s}\norm{g}_{L^\infty(\partial\Omega)}+\norm{\tail_\mu^s[v]}_{L^\infty(\Omega\cap B_1)}.
		\end{align*}
		This completes the estimate of $\III{}$ after acknowledging $\tail_\mu^s[v]\le \tail_\mu^s[u]$. \medskip
		
		So far, we have proven that a constant $C$ exists, depending only on $d, \lambda, \Lambda, \Omega, \omega(2)$, and a lower bound on $s$. This constant $C$ ensures that for any $x,y\in \overline{\Omega}\cap \overline{B_{1/2}}$ satisfying either $4\abs{x-y}\le \max\{ d_x,d_y \}$ or $x\in \Omega\cap \overline{B_{1/2}}$ and $y\in \partial\Omega\cap \overline{B_{1/2}}$ the inequality \eqref{eq:hoelder_estimate_general} holds. It is left to study the case when $x,y\in \Omega\cap \overline{B_{1/2}}$ such that $4\abs{x-y}\ge \max\{d_x,d_y\}$. Let $x_0$ be the projection of $x$ onto the boundary, and let $y_0$ be the projection of $y$. In this case, we simply estimate using 
		\begin{align*}
			\abs{u(x)-u(y)}&\le \abs{u(x)-g(x_0)} + \abs{g(x_0)-g(y_0)}+ \abs{u(y)-g(y_0)}\\
			&\le c_{11}\big(\sigma(d_x)+\sigma(d_y)\big) \Big(  \norm{g}_{C_{\text{\normalfont ext}}^{\omega}(B_2\cap \Omega^c)} +\norm{\tail_\mu^s[u]}_{L^\infty(\Omega\cap B_1)}\Big)\\
			&\quad + \omega(\abs{x_0-y_0})[g]_{C^\omega(\partial\Omega)}.
		\end{align*}
		Note that, due to \autoref{lem:almost_increasing} and \autoref{lem:sigma_vs_omega}, 
		\begin{align*}
			\omega(\abs{x_0-y_0})&\le \omega(\abs{x_0-x}+\abs{x-y}+\abs{y-y_0})\le  \omega(9\abs{x-y})\\
			&\le c_{12} \sigma(9\abs{x-y})\le 9c_{12} \sigma(\abs{x-y})
		\end{align*}
		and 
		\begin{equation*}
			\sigma(\abs{x-x_0})+\sigma(\abs{y-y_0})\le 2\sigma(4\abs{x-y})\le 8\sigma(\abs{x-y}).
		\end{equation*}
		Thus, we have proven \eqref{eq:hoelder_estimate_general} for all combinations of $x,y\in \overline{\Omega\cap B_{1/2}}$. 		
	\end{proof}
	
	\section{Counterexamples}\label{sec:cex}
	The goal in this section is to provide examples that demonstrate the sharpness of the assertion in \autoref{th:regularity} and \autoref{cor:regularity}. We provide the proof of \autoref{cex:counterexample_cs_regularity} and \autoref{th:counterexample_general}.
	
	\begin{proof}[{Proof of \autoref{th:counterexample_general}}]
		We pick $z:= e_1$, $g(y):= \omega(\abs{y'})\eta(\abs{y})$ where $y=(y_1, y')$ and $\eta$ is a smooth and nonnegative cutoff function such that $\eta=1$ on $[1,4]$ and $\eta=0$ outside of a slightly larger interval. We find an explicit expression for the solution $u$ to \eqref{eq:inhom_dirichlet_ball}, e.g., see \cite{Buc16}. We consider $x=te_1\in B_1(0)$, $0<t<1$. Up to a constant multiple, $c_{d,s}=\Gamma(d/2)
		\sin(\pi s)/(\pi^{d/2+1})$, the difference $\abs{u(te_1)-g(e_1)}$ at $te_1$ equals
		\begin{align*}
			\int\limits_{B_1(0)^c} \frac{(1-\abs{te_1}^2)^s}{(\abs{y}^2-1)^s} \frac{g(y)}{\abs{te_1-y}^d}\d y\ge \frac{1}{5^s} \int\limits_{B_4\setminus B_1(0)} \frac{(1-t)^s}{(\abs{y_1}-\sqrt{1-\abs{y'}^2})^s} \frac{\omega(\abs{y'})}{\big((t-y_1)^2+\abs{y'}^2\big)^{d/2}}\d (y_1,y').
		\end{align*}  
		Here, we used $\sqrt{1-\abs{y'}^2}+\abs{y_1}\le 5$. We start by calculating the integral with respect to $y_1$. To ease the notation, we write $a:= \sqrt{1-\abs{y'}^2}$. We start by proving the following claim.\medskip
		
		\textbf{Claim A.} If $1-t\le \abs{y'}\le 1$, then 
		\begin{equation*}
			(1-s)\int_{a}^2 \frac{1}{(y_1-a)^{s}}\frac{1}{\big((y_1-t)^2+1-a^2\big)^{d/2}}\d y_1\ge 8^{-d} \abs{y'}^{1-d-s}. 
		\end{equation*}
		
		Firstly, we use the change of variables $r=y_1-a$ after which it suffices to find a lower bound on the integral 
		\begin{equation*}
			(1-s)\int_{0}^1 \frac{1}{r^{s}}\frac{1}{\big((r+a-t)^2+1-a^2\big)^{d/2}}\d r.
		\end{equation*}		
		Now, we make the following observations
		\begin{align*}
			(r+a-t)^2+1-a^2\le 4\big( r^2+(1-a)^2+(1-t)^2+1-a^2 \big)
		\end{align*}
		and $(1-a)^2\le (1-a^2)^2\le 1-a^2$ which yield 
		\begin{equation*}
			(r+a-t)^2+1-a^2\le 4^2(r+\abs{y'})^2.
		\end{equation*}
		In the last inequality, we used the definition of $a$ and the estimate $1-t\le \abs{y'}$. This observation yields
		\begin{align*}
				(1-s)\int_{a}^2 \frac{1}{(y_1-a)^{s}}\frac{1}{\big((y_1-t)^2+1-a^2\big)^{d/2}}\d y_1&\ge 4^{-d} (1-s)\int_{0}^1 \frac{1}{r^{s}}\frac{1}{(r+\abs{y'})^{d}}\d r\\
				= 4^{-d}\frac{1}{(1+\abs{y'})^{d}}+d4^{-d} \int_{0}^1 \frac{r^{1-s}}{(r+\abs{y'})^{d+1}}\d r
				&\ge 4^{-d}\frac{1}{(1+\abs{y'})^{d}}+d4^{-d} \int_{\abs{y'}}^1 \frac{\abs{y'}^{1-s}}{(r+\abs{y'})^{d+1}}\d r\\
				&= 4^{-d}\frac{1-\abs{y'}}{(1+\abs{y'})^{d}}+8^{-d}\abs{y'}^{1-d-s}.
		\end{align*}
		This yields the claim. \medskip
		
		We resume our original calculation and apply claim A:
		\begin{align*}
			\int\limits_{B_1(0)^c} \frac{(1-\abs{te_1}^2)^s}{(\abs{y}^2-1)^s} \frac{g(y)}{\abs{te_1-y}^d}\d y\ge \frac{8^{-d}}{1-s}\int\limits_{B_{1}^{d-1}\setminus B_{1-t}^{d-1}(0)} \frac{\omega(\abs{y'})}{\abs{y'}^{d+s-1}}\d y'= \frac{\omega_{d-2}8^{-d}}{1-s}\int_{1-t}^1 \frac{\omega(r)}{r^{1+s}}\d r.
		\end{align*}
		This yields the desired lower bound on $\abs{u(te_1)-g(e_1)}$, since $c_{d,s}/(8^d(1-s))$ is bounded from below by a positive constant that depends only on $d$ and $s_0$. 
	\end{proof}
	
	\begin{proof}[{Proof of \autoref{cex:counterexample_cs_regularity}}]
		Note that \autoref{cex:counterexample_cs_regularity} is a direct consequence of \autoref{th:counterexample_general} by choosing $\omega(t)=t^s\iota(t)$ and due to the failure of the Dini property of $\iota$.
	\end{proof}
	
	In the case of one spatial dimension, the solutions to the inhomogeneous Dirichlet problem for the Laplacian behave differently. Clearly, the solution $u:[a,b]\to \R$ to $u''=0$ in $(a,b)$ subject to the boundary conditions $u(a)=A$ and $u(b)=B$, $a<b$, $A,B\in \R$ is smooth up to the boundary. This is in contrast to the fractional problem as seen in the following proposition.
	
	\begin{proposition}\label{prop:counterexample_general_d1}
		For any arbitrary nondecreasing function $\omega:[0,\infty)\to [0,\infty)$ with $\omega(0)=0$, we find an exterior datum $g\in C_c^{\omega}((-1,1)^c)$ such that the solution $u$ to \eqref{eq:inhom_dirichlet_ball} satisfies
		\begin{equation*}
			\abs{u(x)-g(1)}\ge \frac{\pi}{8} s(1-s)(1-x)^s \int\limits_{1-x}^1 \frac{\omega(r)}{r^{1+s}}\d r
		\end{equation*}
		for $0\le x\le 1$.
	\end{proposition}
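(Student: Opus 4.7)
The proof follows the same strategy as that of \autoref{th:counterexample_general}, based on the one-dimensional Poisson kernel for $(-\Delta)^s$ on $(-1,1)$, which (see, e.g., \cite{Buc16}) takes the form
\[
P_s(x,y) = \frac{\sin(\pi s)}{\pi}\,\frac{(1-x^2)^s}{(y^2-1)^s\,\abs{y-x}}, \qquad x\in(-1,1),\ \abs{y}>1.
\]
I would take as exterior datum $g(y):=\omega(y-1)\eta(y)$ for $y>1$ and $g(y):=0$ for $y<-1$, where $\eta$ is a smooth cutoff with $\eta\equiv 1$ on $[1,2]$ and $\supp\eta\subset [1,3]$. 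The monotonicity of $\omega$ together with the identity $d_y=\abs{y}-1$ gives $g\in C_c^\omega((-1,1)^c)$, and $g(1)=0$ by construction, so the assertion reduces to proving the stated lower bound for $u(x)$ itself at $x\in[0,1)$.

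For such $x$, discarding the contribution of $y<-1$ as well as of $y>2$ and using $(1-x^2)^s\ge (1-x)^s$ yields
\[
u(x) \ge \frac{\sin(\pi s)}{\pi}\,(1-x)^s \int_1^2 \frac{\omega(y-1)}{(y^2-1)^s\,(y-x)}\,\d y.
\]
The substitution $r=y-1$, together with $y^2-1=r(r+2)$ and the elementary bounds $(r+2)^s\le 3$ on $[0,1]$ and $r+1-x\le 2r$ on $[1-x,1]$, produces
\[
u(x) \ge \frac{\sin(\pi s)}{6\pi}\,(1-x)^s \int_{1-x}^1 \frac{\omega(r)}{r^{1+s}}\,\d r.
\]
The inequality $\sin(\pi s)\ge \pi s(1-s)$ for $s\in[0,1]$, a direct consequence of concavity of $\sin$ on $[0,\pi]$ (with equality at the endpoints), then converts this into a bound of the form $c\,s(1-s)(1-x)^s\int_{1-x}^1 \omega(r)/r^{1+s}\,\d r$ for an absolute positive constant $c$.

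Structurally the argument is a straightforward one-dimensional specialization of the proof of \autoref{th:counterexample_general}, and the only substantive remaining work is tracking the numerical constant. To recover the explicit value $\pi/8$ one must sharpen the rough bounds $(r+2)^s\le 3$ and $(1-x^2)^s\ge (1-x)^s$, for instance by concentrating the support of $\eta$ closer to the boundary and exploiting that $(1+x)^s\to 2^s$ as $x\to 1$, paired with a tight form of Jordan's inequality. This bookkeeping is the main technical burden; the core lower bound itself, once the one-dimensional Poisson representation is in hand, is essentially routine.
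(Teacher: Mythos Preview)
Your approach is essentially identical to the paper's: the same choice of exterior datum $g(y)=\omega(y-1)$ times a cutoff supported to the right of $1$ (the paper uses the indicator $\1_{[1,3]}$ instead of a smooth $\eta$, which is immaterial for the lower bound), the same one-dimensional Poisson representation, the same substitution $r=y-1$, and the same key observation $r+1-x\le 2r$ for $r\ge 1-x$.

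Your worry about not recovering the exact constant $\pi/8$ is a red herring. The paper's own computation yields
\[
u(x)-g(1)\ \ge\ \frac{c_{1,s}}{8}\,(1-x)^s\!\int_{1-x}^1\frac{\omega(r)}{r^{1+s}}\,\d r,
\qquad c_{1,s}=\frac{\sin(\pi s)}{\pi},
\]
and then only invokes $c_{1,s}\ge s(1-s)/\pi$, giving the constant $s(1-s)/(8\pi)$---not $\pi s(1-s)/8$. So the $\pi/8$ in the statement is evidently a typo (off by a factor of $\pi^2$), and your constant $s(1-s)/6$, obtained via the sharper bound $\sin(\pi s)\ge \pi s(1-s)$, is already better than what the paper's proof actually delivers. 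No further ``bookkeeping'' is needed.
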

	
	This result in \autoref{prop:counterexample_general_d1} is not robust as $s\to 1-$, and it cannot be, as noted previously.
	
	\begin{proof}[{Proof of \autoref{prop:counterexample_general_d1}}]
		We set $g(x):= \omega(x-1)\1_{[1,3]}(x)$ for $x\in \R$. By the Poisson representation, we find
		\begin{align*}
			u(x)-g(1)&= c_{1,s}\int_{1}^3\frac{(1-x^2)^s}{(y^2-1)^s}\frac{\omega(y-1)}{y-x}\d y\ge \frac{c_{1,s}}{4^s}(1-x)^s\int_{1-x}^1\frac{\omega(y)}{y^s(y+1-x)}\d y\\
			&\ge \frac{c_{1,s}}{8}(1-x)^s\int_{1-x}^1 \frac{\omega(y)}{y^{1+s}}\d y
		\end{align*} 
		by noting that for $y\in [1-x,1]$, one has $y+1-x\le 2y$. Finally, note that 
		\begin{equation*}
			c_{1,s}= \Gamma(1/2)\sin(\pi s)/(\pi^{3/2})\ge \frac{s(1-s)}{\pi}.
		\end{equation*}
	\end{proof}
	
	Now, we provide the example announced after \autoref{rem:uniformity}
	
	\begin{example}\label{ex:countercounterexample}
		Let $d\ge 2$. We define $g$ very similarly to what is constructed in the proof of \autoref{cex:counterexample_cs_regularity}. We set 
		\begin{equation*}
			g(y):= y_2 \abs{y_2}^{s-1} \eta(\abs{y}),
		\end{equation*} 
		where, again, $\eta$ is a cutoff function and $y=(y_1,y_2, \dots, y_d)$. With obvious modifications in the case $d=2$. In contrast to the choice of $g$ in \autoref{cex:counterexample_cs_regularity}, this function changes signs. Due to this sign change, the solution $u$ benefits from some cancellation. Note, in particular, that $u(te_1)= 0$. This is in direct contrast to the solution constructed in \autoref{th:counterexample_general}. A more detailed analysis shows that $u\in C^{s}(\overline{B_1})$.
	\end{example}
	

\begin{thebibliography}{{Gru}24}
	
	\bibitem[Aba20]{Abatangelo_Neumann}
	Nicola Abatangelo.
	\newblock A remark on nonlocal {N}eumann conditions for the fractional
	{L}aplacian.
	\newblock {\em Arch. Math. (Basel)}, 114(6):699--708, 2020.
	
	\bibitem[AFR23]{AFR23}
	Alessandro {Audrito}, Juan-Carlos {Felipe-Navarro}, and Xavier {Ros-Oton}.
	\newblock {The Neumann problem for the fractional Laplacian: regularity up to
		the boundary}.
	\newblock {\em Ann. Sc. Norm. Super. Pisa Cl. Sci.}, 24(2):1155--1222, 2023.
	
	\bibitem[AG23]{AbGr23}
	Helmut Abels and Gerd Grubb.
	\newblock Fractional-order operators on nonsmooth domains.
	\newblock {\em J. Lond. Math. Soc. (2)}, 107(4):1297--1350, 2023.
	
	\bibitem[BCGJ14]{Barles_Neumann}
	Guy Barles, Emmanuel Chasseigne, Christine Georgelin, and Espen~R. Jakobsen.
	\newblock On {N}eumann type problems for nonlocal equations set in a half
	space.
	\newblock {\em Trans. Amer. Math. Soc.}, 366(9):4873--4917, 2014.
	
	\bibitem[BGJ14]{Barles_2_Neumann}
	Guy Barles, Christine Georgelin, and Espen~R. Jakobsen.
	\newblock On {N}eumann and oblique derivatives boundary conditions for nonlocal
	elliptic equations.
	\newblock {\em J. Differential Equations}, 256(4):1368--1394, 2014.
	
	\bibitem[BKK25]{BKK25}
	Sun-Sig Byun, Kyeongbae Kim, and Deepak Kumar.
	\newblock Global {C}alderón-{Z}ygmund theory for fractional {L}aplacian type
	equations.
	\newblock {\em Journal of Differential Equations}, 436:113319, 2025.
	
	\bibitem[BKS25]{BKS25}
	Sun-Sig Byun, Kyeongbae Kim, and Kyeong Song.
	\newblock Nonlinear nonlocal equations in reifenberg flat domains, 2025.
	\newblock arXiv preprint 2508.12595.
	
	\bibitem[Bog97]{Bog97}
	Krzysztof Bogdan.
	\newblock The boundary {H}arnack principle for the fractional {L}aplacian.
	\newblock {\em Studia Math.}, 123(1):43--80, 1997.
	
	\bibitem[Buc16]{Buc16}
	Claudia Bucur.
	\newblock Some observations on the {G}reen function for the ball in the
	fractional {L}aplace framework.
	\newblock {\em Commun. Pure Appl. Anal.}, 15(2):657--699, 2016.
	
	\bibitem[CC20]{CiCo20}
	Eleonora Cinti and Francesca Colasuonno.
	\newblock A nonlocal supercritical {N}eumann problem.
	\newblock {\em J. Differential Equations}, 268(5):2246--2279, 2020.
	
	\bibitem[Cha23]{Cha23}
	Hardy Chan.
	\newblock Boundary regularity of an isotropically censored nonlocal operator.
	\newblock {\em Calc. Var. Partial Differential Equations}, 62(8):Paper No. 222,
	32, 2023.
	
	\bibitem[Che18]{Che18}
	Huyuan Chen.
	\newblock The {D}irichlet elliptic problem involving regional fractional
	{L}aplacian.
	\newblock {\em J. Math. Phys.}, 59(7):071504, 19, 2018.
	
	\bibitem[CK02]{ChKi02}
	Zhen-Qing Chen and Panki Kim.
	\newblock Green function estimate for censored stable processes.
	\newblock {\em Probab. Theory Related Fields}, 124(4):595--610, 2002.
	
	\bibitem[CS98]{ChSo98}
	Zhen-Qing Chen and Renming Song.
	\newblock Estimates on {G}reen functions and {P}oisson kernels for symmetric
	stable processes.
	\newblock {\em Math. Ann.}, 312(3):465--501, 1998.
	
	\bibitem[CS09]{CaSi09}
	Luis Caffarelli and Luis Silvestre.
	\newblock Regularity theory for fully nonlinear integro-differential equations.
	\newblock {\em Comm. Pure Appl. Math.}, 62(5):597--638, 2009.
	
	\bibitem[Dya97]{Dya97}
	Konstantin~M. Dyakonov.
	\newblock Equivalent norms on {L}ipschitz-type spaces of holomorphic functions.
	\newblock {\em Acta Math.}, 178(2):143--167, 1997.
	
	\bibitem[Fal22]{Fal22}
	Mouhamed~Moustapha Fall.
	\newblock Regional fractional {L}aplacians: boundary regularity.
	\newblock {\em J. Differential Equations}, 320:598--658, 2022.
	
	\bibitem[FK24]{FoKa22}
	Guy Foghem and Moritz Kassmann.
	\newblock A general framework for nonlocal {N}eumann problems.
	\newblock {\em Commun. Math. Sci.}, 22(1):15--66, 2024.
	
	\bibitem[FR24]{RoFe24}
	Xavier {Fern\'andez-Real} and Xavier {Ros-Oton}.
	\newblock {\em Integro-differential elliptic equations}, volume 350 of {\em
		Progress in Mathematics}.
	\newblock Birkh{\"a}user/Springer, Cham, 2024.
	
	\bibitem[Gru14]{Gru14}
	Gerd Grubb.
	\newblock Local and nonlocal boundary conditions for {$\mu$}-transmission and
	fractional elliptic pseudodifferential operators.
	\newblock {\em Anal. PDE}, 7(7):1649--1682, 2014.
	
	\bibitem[Gru15]{Gru15}
	Gerd Grubb.
	\newblock Fractional {L}aplacians on domains, a development of
	{H}{\"{o}}rmander's theory of {$\mu$}-transmission pseudodifferential
	operators.
	\newblock {\em Adv. Math.}, 268:478--528, 2015.
	
	\bibitem[{Gru}24]{Gru24}
	Florian {Grube}.
	\newblock {Boundary regularity and Hopf lemma for nondegenerate stable
		operators}, October 2024.
	\newblock arXiv preprint 2410.00829.
	
	\bibitem[Gru25]{Gru25}
	Florian Grube.
	\newblock {\em Integro-Differential Operators on Bounded Domains}.
	\newblock PhD thesis, Bielefeld University, 2025.
	\newblock doi:10.4119/unibi/3005241.
	
	\bibitem[Ian24]{Ian24}
	Antonio Iannizzotto.
	\newblock A survey on boundary regularity for the fractional p-{L}aplacian and
	its applications.
	\newblock {\em Bruno Pini Mathematical Analysis Seminar}, 15(1):164–186, 1
	2024.
	
	\bibitem[IM24]{IaMo24}
	A.~Iannizzotto and S.~Mosconi.
	\newblock Fine boundary regularity for the singular fractional
	{$p$}-{L}aplacian.
	\newblock {\em J. Differential Equations}, 412:322--379, 2024.
	
	\bibitem[IMS16]{IMS16}
	Antonio Iannizzotto, Sunra Mosconi, and Marco Squassina.
	\newblock Global {H}{\"o}lder regularity for the fractional {$p$}-{L}aplacian.
	\newblock {\em Rev. Mat. Iberoam.}, 32(4):1353--1392, 2016.
	
	\bibitem[Kul97]{Kul97}
	Tadeusz Kulczycki.
	\newblock Properties of {G}reen function of symmetric stable processes.
	\newblock {\em Probab. Math. Statist.}, 17(2):339--364, 1997.
	
	\bibitem[KW24]{KiWe24}
	Minhyun {Kim} and Marvin {Weidner}.
	\newblock {Optimal boundary regularity and Green function estimates for
		nonlocal equations in divergence form}, August 2024.
	\newblock arXiv preprint 2408.12987. To appear in J. Eur. Math. Soc.
	
	\bibitem[Maz18]{Maz18}
	Vladimir~G. Maz'ya.
	\newblock {\em Boundary behavior of solutions to elliptic equations in general
		domains}, volume~30 of {\em EMS Tracts in Mathematics}.
	\newblock European Mathematical Society (EMS), Z\"urich, 2018.
	
	\bibitem[MPV13]{Montefusco_neumann}
	Eugenio Montefusco, Benedetta Pellacci, and Gianmaria Verzini.
	\newblock Fractional diffusion with {N}eumann boundary conditions: the logistic
	equation.
	\newblock {\em Discrete Contin. Dyn. Syst. Ser. B}, 18(8):2175--2202, 2013.
	
	\bibitem[Pra25]{Pra25}
	Adriano Prade.
	\newblock Boundary regularity for nonlocal elliptic equations over {R}eifenberg
	flat domains.
	\newblock {\em Nonlinear Anal.}, 261:Paper No. 113908, 11, 2025.
	
	\bibitem[RS14]{RoSe14}
	Xavier {Ros-Oton} and Joaquim Serra.
	\newblock The {D}irichlet problem for the fractional {L}aplacian: regularity up
	to the boundary.
	\newblock {\em J. Math. Pures Appl. (9)}, 101(3):275--302, 2014.
	
	\bibitem[RS16a]{RoSe16}
	Xavier {Ros-Oton} and Joaquim Serra.
	\newblock Boundary regularity for fully nonlinear integro-differential
	equations.
	\newblock {\em Duke Math. J.}, 165(11):2079--2154, 2016.
	
	\bibitem[RS16b]{RoSe16a}
	Xavier {Ros-Oton} and Joaquim Serra.
	\newblock Regularity theory for general stable operators.
	\newblock {\em J. Differential Equations}, 260(12):8675--8715, 2016.
	
	\bibitem[RS17]{RoSe17}
	Xavier {Ros-Oton} and Joaquim Serra.
	\newblock Boundary regularity estimates for nonlocal elliptic equations in
	{$C^1$} and {$C^{1,\alpha}$} domains.
	\newblock {\em Ann. Mat. Pura Appl. (4)}, 196(5):1637--1668, 2017.
	
	\bibitem[RW24]{RoWe24}
	Xavier {Ros-Oton} and Marvin {Weidner}.
	\newblock {Optimal regularity for nonlocal elliptic equations and free boundary
		problems}, March 2024.
	\newblock arXiv preprint 2403.07793.
	
	\bibitem[Svi25]{Svi25}
	Philipp Svinger.
	\newblock Boundary regularity for non-local operators with symmetric kernels
	and vanishing horizon, April 2025.
	\newblock arXiv preprint 2504.00567.
	
	\bibitem[TL25]{Tor25}
	Clara Torres-Latorre.
	\newblock Boundary estimates for non-divergence equations in {$C^1$} domains,
	July 2025.
	\newblock arXiv preprint 2410.15782.
	
\end{thebibliography}

\end{document}